\newcommand{\referenza}{}
\newtheorem{prop}{Proposition}[section]
\newtheorem{thm}[prop]{Theorem}
\newtheorem*{thm*}{Theorem \referenza}
\newtheorem{lemma}[prop]{Lemma}
\theoremstyle{definition}
\newtheorem{defi}[prop]{Definition}
\newcommand{\N}{\mathbb{N}}
\newcommand{\Z}{\mathbb{Z}}
\newcommand{\R}{\mathbb{R}}
\newcommand{\C}{\mathbb{C}}
\newcommand{\st}{\;:\;}
\newcommand{\sspace}{\cdot}
\newcommand{\ssspace}{\cdot\cdot}
\newcommand{\cp}{\smile}
\DeclareMathOperator{\im}{i}
\DeclareMathOperator{\imm}{im}
\DeclareMathOperator{\de}{d}
\DeclareMathOperator{\GL}{GL}
\newcommand{\del}{\partial}
\newcommand{\delbar}{\overline{\del}}
\newcommand{\CP}{\mathbb{CP}}
\DeclareMathOperator{\esp}{e}
\renewcommand{\Im}{\mathsf{Im}}
\renewcommand{\Re}{\mathsf{Re}}
\title{On Bott-Chern cohomology and formality}
\author{Daniele Angella}
\address[Daniele Angella]{Istituto Nazionale di Alta Matematica\\
at Dipartimento di Matematica e Informatica\\
Universit\`{a} di Parma\\
Parco Area delle Scienze 53/A, 43124\\
Parma, Italy}
\curraddr{Centro di Ricerca Matematica ``Ennio de Giorgi''\\
Collegio Puteano, Scuola Normale Superiore\\
Piazza dei Cavalieri 3\\
56126 Pisa, Italy}
\email{daniele.angella@gmail.com}
\author{Adriano Tomassini}
\address[Adriano Tomassini]{Dipartimento di Matematica e Informatica\\
Universit\`{a} di Parma \\
Parco Area delle Scienze 53/A, 43124 \\
Parma, Italy}
\email{adriano.tomassini@unipr.it}
\keywords{complex manifold, Hermitian metric, formality, cohomology, Bott-Chern cohomology}
\thanks{The first author is granted with a research fellowship by Istituto Nazionale di Alta Matematica INdAM, and is supported by the Project PRIN ``Varietà reali e complesse: geometria, topologia e analisi armonica'', by the Project FIRB ``Geometria Differenziale e Teoria Geometrica delle Funzioni'', and by GNSAGA of INdAM. The second author is supported by the Project PRIN ``Varietà reali e complesse: geometria, topologia e analisi armonica'', by Project FIRB ``Geometria Differenziale Complessa e Dinamica Olomorfa'', and by GNSAGA of INdAM.\vspace{10pt}\\
To appear in {\em J. Geom. Phys.}}
\subjclass[2010]{32Q99, 53C55, 32C35}
\begin{document}

\begin{abstract}
 We study a geometric notion related to formality for Bott-Chern cohomology on complex manifolds.
\end{abstract}

\maketitle

\section*{Introduction}

Cohomological properties of complex manifolds $X$ are encoded in several cohomological invariants. On the one side, the de Rham cohomology $H^{\bullet}_{dR}(X;\C)$ is a topological invariant. On the other side, the Dolbeault cohomology $H^{\bullet,\bullet}_{\delbar}(X)$ is not directly naturally linked with the de Rham cohomology. In a sense, {\em Bott-Chern and Aeppli cohomologies},
$$ H^{\bullet,\bullet}_{BC}(X) \;:=\; \frac{\ker\del\cap\ker\delbar}{\imm\del\delbar} \qquad \text{ and } \qquad H^{\bullet,\bullet}_{A}(X) \;:=\; \frac{\ker\del\delbar}{\imm\del+\imm\delbar} \;, $$
provide a kind of bridge connecting Dolbeault and de Rham cohomologies. In fact, the identity induces natural maps
$$
\xymatrix{
 & H^{\bullet,\bullet}_{BC}(X) \ar[d]\ar[ld]\ar[rd] & \\
 H^{\bullet,\bullet}_{\del}(X) \ar[rd] & H^{\bullet}_{dR}(X;\C) \ar[d] & H^{\bullet,\bullet}_{\delbar}(X) \ar[ld] \\
 & H^{\bullet,\bullet}_{A}(X) &
 }
$$
of (bi-)graded vector spaces.
Initially introduced in \cite{bott-chern} and \cite{aeppli}, they naturally arose in several problems in complex analysis, complex geometry, and theoretical physics.

\medskip

There are two complementary directions in studying Bott-Chern cohomology. On the one side, one can be interested in studying its relation with de Rham cohomology. The very special property that the natural map $\bigoplus_{p+q=\bullet} H^{p,q}_{BC}(X) \to H^{\bullet}_{dR}(X;\C)$ is injective, (and hence an isomorphism, \cite[Lemma 5.15, Remark 5.16, 5.21]{deligne-griffiths-morgan-sullivan},) is called {\em $\del\delbar$-Lemma property}, \cite{deligne-griffiths-morgan-sullivan}, and it holds for compact K\"ahler manifolds, \cite[Main Theorem]{deligne-griffiths-morgan-sullivan}. In this view, in \cite[Theorem A, Theorem B]{angella-tomassini-3}, we studied an inequality {\itshape à la} Fr\"olicher for Bott-Chern cohomology of compact complex manifolds of complex dimension $n$, which provides a characterization of $\del\delbar$-Lemma: for any $k\in\Z$,
\begin{eqnarray*}
\Delta^k &:=& \sum_{p+q=k} \left( \dim_\C H^{p,q}_{BC}(X) + \dim_\C H^{n-q,n-p}_{BC}(X) \right) - 2\, b_k \;\geq\; 0 \;,
\end{eqnarray*}
and the equality holds for any $k\in\Z$ if and only if $X$ satisfies the $\del\delbar$-Lemma. Note that, for compact complex surfaces, the cohomologically-K\"ahlerness expressed by the $\del\delbar$-Lemma property is in fact equivalent to K\"ahlerness, as a consequence of the K\"ahlerness characterization in terms of the first Betti number being even. More precisely, for compact complex surfaces, one has that the only degree of non-K\"ahlerness is $\Delta^2\in2\,\N$, \cite[Theorem 1.1]{angella-dloussky-tomassini}.

\medskip

On the other side, one can focus on the algebraic structure of $H^{\bullet,\bullet}_{BC}(X)$ in relation with $\wedge^{\bullet,\bullet}X$. In fact, $H^{\bullet,\bullet}_{BC}(X)$ has a structure of algebra, induced by the exterior product, and $H^{\bullet,\bullet}_{A}(X)$ has a structure of $H^{\bullet,\bullet}_{BC}(X)$-module.

Let us consider the de Rham cohomology $H^{\bullet}_{dR}(X;\R)$ of a compact differentiable manifold $X$. It has a structure of algebra given by the cap product, induced by the wedge product on the space $\wedge^\bullet X$ of differential forms. But, in general, it is not possible to choose a basis of representatives having a structure of algebra. In \cite[\S 3.1]{zhou}, the space of harmonic forms over a compact Riemannian manifold is endowed with a structure of $\mathrm{A}_{\infty}$-algebra in the sense of Stasheff, \cite[II, Definition 2.1]{stasheff-1-2}. The first linear maps of this structure are $m_1=0$ and $m_2=\left(\sspace \wedge \ssspace\right)^H$, where $\left(\sspace\right)^H$ denotes the projection onto the space of harmonic forms with respect to the fixed Riemannian metric. In \cite[Theorem 3.1, Corollary A.5]{lu-palmieri-wu-zhang}, it is proven that, for any $n\in\N\setminus\{0,1,2\}$, for any $x_1,\ldots,x_n$ harmonic forms, the element $m_n\left(x_1,\ldots,x_n\right)$ is a representative of the $n$th Massey product of $[x_1], \ldots, [x_n]$.

As firstly considered by Sullivan, \cite[page 326]{sullivan}, there is an ``incompatibility of wedge products and harmonicity of forms'' on a compact differentiable Riemannian manifold. This leads to the notion, introduced and studied by D. Kotschick, of {\em formal metric}, \cite[Definition 1]{kotschick}, namely, a Riemannian metric such that the space of harmonic forms has a structure of algebra. 
Note that, by definition, if the fixed Riemannian metric is formal in the sense of Kotschick, then, instead of $\left( \sspace \wedge \ssspace \right)^H$, one has just the wedge product between harmonic forms, letting the $A_{\infty}$ structure being in fact an algebra. The existence of such a metric is a stronger condition than formality: see, for example, the obstructions provided in \cite[Theorem 6, Theorem 7, Theorem 9]{kotschick}.

D. Sullivan introduced the notion of formality, see \cite[\S12]{sullivan}. It provides a way to relate the algebra structure of the complex of forms and the algebra structure of the de Rham cohomology.
A differentiable manifold $X$ is called {\em formal} in the sense of Sullivan \cite{sullivan} if the minimal model of its associated differential graded algebra $\left( \wedge^{\bullet}X,\, \de \right)$ and of the de Rham cohomology $\left( H^\bullet_{dR}(X;\R),\, 0 \right)$ coincide, \cite[page 315]{sullivan}.
In other words, we have a diagram
$$ \xymatrix{
  & \left( M^{\bullet},\, \de_M \right) \ar[ld]_{f} \ar[rd]^{g} & \\
 \left( \wedge^\bullet X,\, \de\right) & & \left( H^\bullet_{dR}(X;\R),\, 0 \right)
} $$
of differential $\Z$-graded algebras with $f$ and $g$ quasi-isomorphisms.
This means that the rational homotopy of $X$ ``can be computed formally from'' the cohomology ring $H^{\bullet}_{dR}(X;\R)$.
In particular, if $X$ is formal in the sense of Sullivan, \cite{sullivan}, then all the Massey products vanish.
A {\em geometrically formal} compact differentiable manifold, (that is, a compact differentiable manifold admitting a formal Riemannian metric,) is formal in the sense of Sullivan, \cite[\S1]{kotschick}.

\medskip

In this note, we are interested in notions of formality strictly related to complex structures. To this aim, recall the work by J. Neisendorfer and L. Taylor on Dolbeault formality, \cite{neisendorfer-taylor}. They study {\em Dolbeault formal} manifolds, that is, complex manifolds for which the double complex of differential forms is formal as a differential $\Z^2$-graded algebra with respect to the $\delbar$ operator. This happens, for example, for compact K\"ahler manifolds, or, more in general, for compact complex manifolds satisfying the $\del\delbar$-Lemma, \cite[Theorem 8]{neisendorfer-taylor}. See also \cite{tomassini-torelli}.

\medskip

But the above formality theory, valid for both de Rham and Dolbeault cohomology, seems not to work exactly for Bott-Chern cohomology, due to technical reasons. For example, note that the Bott-Chern cohomology is not computed as the cohomology of a differential graded algebra. We try to start here the ``algebraic'' study of the Bott-Chern cohomology by investigating the notion of {\em geometrically-Bott-Chern-formal metrics} on compact complex manifolds, namely, Hermitian metrics for which the product of harmonic forms with respect to the Bott-Chern Laplacian, \cite{kodaira-spencer-3, schweitzer}, is still harmonic. (Here, harmonic is intented in the sense of the Hodge theory developed by M. Schweitzer for Bott-Chern and Aeppli cohomologies in \cite{schweitzer}.) This seems, as far as we know, a first attempt to understand a possible notion of formality related to Bott-Chern cohomology.

More precisely, we introduce {\em triple Aeppli-Bott-Chern-Massey products}, see \S\ref{subsec:massey}. We show that they provide an obstruction to the existence of geometrically-Bott-Chern-formal metrics.

\renewcommand{\referenza}{\ref{thm:ABC-massey-geom-BC}}
\begin{thm*}
 Triple Aeppli-Bott-Chern-Massey products vanish on compact complex geometrically-Bott-Chern-formal manifolds.
\end{thm*}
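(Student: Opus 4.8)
The plan is to evaluate the triple Aeppli--Bott--Chern--Massey product on the harmonic representatives furnished by Schweitzer's Hodge theory for Bott--Chern cohomology \cite{schweitzer}, exploiting geometric-Bott--Chern-formality to force the cup products entering its construction to vanish identically (not merely in cohomology), so that the auxiliary primitive forms appearing in the definition can be taken to be zero.

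Concretely, I would fix on the compact complex manifold $X$ a geometrically-Bott--Chern-formal Hermitian metric and recall from \cite{schweitzer} that, denoting by $\mathcal{H}^{\bullet,\bullet}_{BC}(X)$ and $\mathcal{H}^{\bullet,\bullet}_{A}(X)$ the spaces of harmonic forms for the associated Bott--Chern and Aeppli Laplacians, one has the Hodge-type isomorphisms $\mathcal{H}^{\bullet,\bullet}_{BC}(X)\simeq H^{\bullet,\bullet}_{BC}(X)$ and $\mathcal{H}^{\bullet,\bullet}_{A}(X)\simeq H^{\bullet,\bullet}_{A}(X)$; in particular the only harmonic representative of the zero Bott--Chern class is the zero form. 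I would then recall from \S\ref{subsec:massey} the shape of the triple product: given $\mathfrak{a}=[a]_{BC}$, $\mathfrak{b}=[b]_{BC}$, $\mathfrak{c}=[c]_{BC}$ with $\mathfrak{a}\smile\mathfrak{b}=0$ and $\mathfrak{b}\smile\mathfrak{c}=0$ in $H^{\bullet,\bullet}_{BC}(X)$, one chooses forms $\eta,\zeta$ with $\del\delbar\eta=a\wedge b$ and $\del\delbar\zeta=b\wedge c$, observes that $\eta\wedge c-a\wedge\zeta\in\ker\del\delbar$, and defines $\langle\mathfrak{a},\mathfrak{b},\mathfrak{c}\rangle$ to be its class in $H^{\bullet,\bullet}_{A}(X)$, well-defined modulo the indeterminacy $\mathfrak{a}\smile H^{\bullet,\bullet}_{A}(X)+H^{\bullet,\bullet}_{A}(X)\smile\mathfrak{c}$.

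The core step is then the following. I would take $a,b,c$ to be the harmonic representatives of $\mathfrak{a},\mathfrak{b},\mathfrak{c}$ in $\mathcal{H}^{\bullet,\bullet}_{BC}(X)$. By geometric-Bott--Chern-formality, $a\wedge b$ and $b\wedge c$ belong to $\mathcal{H}^{\bullet,\bullet}_{BC}(X)$ as well; but they are $\del$- and $\delbar$-closed representatives of $\mathfrak{a}\smile\mathfrak{b}=0$ and $\mathfrak{b}\smile\mathfrak{c}=0$ respectively, so the uniqueness of the harmonic representative of the zero Bott--Chern class forces $a\wedge b=0$ and $b\wedge c=0$ as differential forms. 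Hence in the construction above I may take $\eta=0$ and $\zeta=0$, so that the $\del\delbar$-closed form $\eta\wedge c-a\wedge\zeta$ is identically zero; therefore the triple Aeppli--Bott--Chern--Massey product $\langle\mathfrak{a},\mathfrak{b},\mathfrak{c}\rangle$ vanishes in $H^{\bullet,\bullet}_{A}(X)$. Since $\mathfrak{a},\mathfrak{b},\mathfrak{c}$ are arbitrary subject to the defining relations, this completes the argument.

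I expect the step carrying all the weight to be the implication ``$a\wedge b$ is a harmonic representative of a zero Bott--Chern class $\Longrightarrow a\wedge b=0$'': it is precisely here that geometric-Bott--Chern-formality is used, and it is what separates this situation from that of an arbitrary Hermitian metric, for which $a\wedge b$ would in general be a non-harmonic (though $\del$- and $\delbar$-closed) representative of $0$ and the conclusion would break down. The remaining point, purely a matter of bookkeeping, is to check that the sign and indeterminacy conventions adopted for $\langle\cdot,\cdot,\cdot\rangle$ in \S\ref{subsec:massey} are compatible with the choice $\eta=\zeta=0$, which is immediate since $\del\delbar 0=0=a\wedge b$.
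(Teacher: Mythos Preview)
Your proof is correct and is essentially the same argument as the paper's, just presented more concretely: the paper abstracts the core step into a naturality lemma for triple Aeppli--Bott--Chern--Massey products under morphisms of bi-differential $\Z^2$-graded algebras (Lemma~\ref{lemma:abc-massey-morphism}), applies it to the inclusion $\iota_{BC}^{g}\colon(\ker\Delta_{BC}^{g},0,0)\hookrightarrow(\wedge^{\bullet,\bullet}X,\del,\delbar)$, and then notes that in a $\Z^2$-graded algebra with zero differentials all such products are trivially zero; your argument unrolls exactly this last observation by choosing harmonic representatives and taking the primitives $\eta,\zeta$ to be zero. The key implication you single out---``$a\wedge b$ harmonic and $[a\wedge b]_{BC}=0$ forces $a\wedge b=0$''---is precisely the content of ``Massey products vanish in zero-differential algebras'' in the paper's phrasing.
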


Several examples are studied explicitly in Section \ref{sec:examples}, here including Iwasawa manifold, compact complex surfaces diffeomorphic to solvmanifolds, Calabi-Eckmann structures on $\mathbb{S}^1\times\mathbb{S}^3$ and on $\mathbb{S}^3\times\mathbb{S}^3$, holomorphically-parallelizable Nakamura manifold.

\bigskip

\noindent {\itshape Acknowledgments.} The authors would like to thank Hisashi Kasuya, Sara Torelli, Nicoletta Tardini, Jos\'{e} Ignacio Cogolludo Agust\'{i}n, Georges Dloussky, Giovanni Bazzoni for several discussions. Many thanks also to the anonymous Referee for her/his useful suggestions.

\section{Bott-Chern cohomology and formality}
Let $X$ be a compact complex manifold of complex dimension $n$. We first recall the definition of Bott-Chern cohomology and some results on Hodge theory. Then we introduce the notion of geometrically-Bott-Chern-cohomology.

\subsection{Bott-Chern cohomology}
The {\em Bott-Chern cohomology}, \cite{bott-chern}, is the $\Z^2$-graded algebra
$$ H^{\bullet,\bullet}_{BC}(X) \;:=\; H^{\bullet,\bullet}_{BC}\left(\wedge^{\bullet,\bullet}X,\, \del,\, \delbar\right) \;:=\; \frac{\ker\del\cap\ker\delbar}{\imm\del\delbar} \;. $$
The {\em Aeppli cohomology}, \cite{aeppli}, is the $\Z^2$-graded $H_{BC}(X)$-module
$$ H^{\bullet,\bullet}_{A}(X) \;:=\; H^{\bullet,\bullet}_{A}\left(\wedge^{\bullet,\bullet}X,\, \del,\, \delbar\right) \;:=\; \frac{\ker\del\delbar}{\imm\del+\imm\delbar} \;. $$

\medskip

For a given Hermitian metric $g$ on $X$, consider the $4$th order self-adjoint elliptic differential operators, \cite[Proposition 5]{kodaira-spencer-3}, \cite[\S2.b]{schweitzer},
$$ \Delta_{BC}^{g} \;:=\;
      \left(\del\delbar\right)\left(\del\delbar\right)^* + \left(\del\delbar\right)^*\left(\del\delbar\right)
      + \left(\delbar^*\del\right)\left(\delbar^*\del\right)^* + \left(\delbar^*\del\right)^*\left(\delbar^*\del\right)
      + \delbar^*\delbar + \del^*\del \;,
$$
and, \cite[\S2.c]{schweitzer},
$$ \Delta_{A}^{g} \;:=\;
      \del\del^* + \delbar\delbar^*
      + \left(\del\delbar\right)^*\left(\del\delbar\right) + \left(\del\delbar\right)\left(\del\delbar\right)^*
      + \left(\delbar\del^*\right)^*\left(\delbar\del^*\right) + \left(\delbar\del^*\right)\left(\delbar\del^*\right)^* \;.
$$
Consider the inclusions
$$
\iota_{BC}^{g} \colon \ker\Delta_{BC}^{g} \hookrightarrow \wedge^{\bullet,\bullet} X
\qquad \text{ and } \qquad
\iota_{A}^{g} \colon \ker\Delta_{A}^{g} \hookrightarrow \wedge^{\bullet,\bullet} X \;.
$$
By \cite[Corollaire 2.3, \S2.c]{schweitzer}, one has that
$$ H_{BC}\left(\iota_{BC}^{g}\right) \text{ and } H_{A}\left(\iota_{A}^{g}\right) \text{ are isomorphisms of vector spaces} \;. $$

\medskip

For a Hermitian metric $g$, the $\C$-linear Hodge-$*$-operator $*_g \colon \wedge^{\bullet_1,\bullet_2}X \to \wedge^{n-\bullet_2,n-\bullet_1}X$ induces the isomorphism
$$ *_g \colon H^{\bullet_1,\bullet_2}_{BC}(X) \stackrel{\simeq}{\to} H^{n-\bullet_2,n-\bullet_1}_{A}(X) \;. $$

\subsection{Geometrically-Bott-Chern-formality}

Inspired by \cite{kotschick}, we consider the following notion, concerning Hermitian metrics for which $\ker\Delta_{BC}^{g}$ has a structure of algebra.

\begin{defi}
 A Hermitian metric $g$ on a compact complex manifold $X$ is called {\em geometrically-Bott-Chern-formal} if it yields the inclusion $\iota_{BC}^{g} \colon \left( \ker\Delta_{BC}^{g},\, 0,\, 0 \right) \hookrightarrow \left( \wedge^{\bullet,\bullet}X,\, \del,\, \delbar \right)$ of bi-differential $\Z^2$-graded algebras, such that $H_{BC}(\iota_{BC}^{g})$ is an isomorphism.
\end{defi}

\begin{defi}
 A compact complex manifold $X$ is called {\em geometrically-Bott-Chern-formal} if there exists a Hermitian metric $g$ on $X$ being geometrically-Bott-Chern-formal.
\end{defi}

\section{Aeppli-Bott-Chern-Massey products}
In this section, we define Aeppli-Bott-Chern-Massey products on complex manifolds, and we show that they provide obstructions to geometrically-Bott-Chern-formality.

\subsection{Aeppli-Bott-Chern-Massey products on complex manifolds}\label{subsec:massey}

We recall that, on a compact complex manifold $X$, the Bott-Chern cohomology $H^{\bullet,\bullet}_{BC}(X)$ has a structure of algebra. The Aeppli cohomology $H^{\bullet,\bullet}_{A}(X)$ has a structure of $H^{\bullet,\bullet}_{BC}(X)$-module. We start by giving the following definition and by proving its coherency.

\begin{defi}
 Let $\left( A^{\bullet,\bullet},\, \del,\, \delbar \right)$ be a bi-differential $\Z^2$-graded algebra. Take
 $$
 \mathfrak{a}_{12} \;=\; \left[\alpha_{12}\right] \in H^{p,q}_{BC}(A^{\bullet,\bullet}) \;,
 \qquad
 \mathfrak{a}_{23} \;=\; \left[\alpha_{23}\right] \in H^{r,s}_{BC}(A^{\bullet,\bullet}) \;,
 $$
 $$
\mathfrak{a}_{34} \;=\; \left[\alpha_{34}\right] \in H^{u,v}_{BC}(A^{\bullet,\bullet}) \;,
 $$
 such that $\mathfrak{a}_{12} \cp \mathfrak{a}_{23}=0$ in $H^{p+r,q+s}_{BC}(A^{\bullet,\bullet})$ and $\mathfrak{a}_{23} \cp \mathfrak{a}_{34}=0$ in $H^{r+u,s+v}_{BC}(A^{\bullet,\bullet})$: let
 $$
 (-1)^{p+q} \, \alpha_{12} \wedge \alpha_{23} \;=\; \del\delbar \alpha_{13}
 \qquad \text{ and } \qquad
 (-1)^{r+s} \, \alpha_{23} \wedge \alpha_{34} \;=\; \del\delbar \alpha_{24} \;.
 $$
 The {\em triple Aeppli-Bott-Chern-Massey product} $\left\langle \mathfrak{a}_{12},\, \mathfrak{a}_{23},\, \mathfrak{a}_{34} \right\rangle_{ABC}$ is defined as
 \begin{eqnarray*}
 \lefteqn{\left\langle \mathfrak{a}_{12},\, \mathfrak{a}_{23},\, \mathfrak{a}_{34} \right\rangle_{ABC}} \\[5pt]
 &:=&
 \left[ (-1)^{p+q}\, \alpha_{12} \wedge \alpha_{24} - (-1)^{r+s}\, \alpha_{13} \wedge \alpha_{34} \right] \\[5pt]
 &\in& \frac{H^{p+r+u-1, q+s+v-1}_{A}(A^{\bullet,\bullet})}{\mathfrak{a}_{12} \cp H^{r+u-1, s+v-1}_{A}(A^{\bullet,\bullet}) + H^{p+r-1, q+s-1}_{A}(A^{\bullet,\bullet}) \cp \mathfrak{a}_{34}} \;.
 \end{eqnarray*}
\end{defi}

\begin{proof}
We have to prove that the form $(-1)^{p+q}\, \alpha_{12} \wedge \alpha_{24} - (-1)^{r+s}\, \alpha_{13} \wedge \alpha_{34}$ is $\del\delbar$-closed, and that its class in the quotient depends neither on the chosen representatives of $\mathfrak{a}_{12}$, $\mathfrak{a}_{23}$,  $\mathfrak{a}_{34}$, nor on the chosen elements $\alpha_{13}$, $\alpha_{24}$.

\medskip

First of all, note that, since $\alpha_{12}$ and $\alpha_{34}$ are $\del$-closed and $\delbar$-closed, then
\begin{eqnarray*}
 \lefteqn{\del\delbar \left( (-1)^{p+q}\, \alpha_{12} \wedge \alpha_{24} - (-1)^{r+s}\, \alpha_{13} \wedge \alpha_{34} \right)}\\[5pt]
 &=& (-1)^{p+q}\, \alpha_{12} \wedge \del\delbar \alpha_{24} - (-1)^{r+s}\, \del\delbar \alpha_{13} \wedge \alpha_{34} \\[5pt]
 &=& (-1)^{p+q+r+s}\, \alpha_{12} \wedge \alpha_{23} \wedge \alpha_{34} - (-1)^{p+q+r+s}\, \alpha_{12} \wedge \alpha_{23} \wedge \alpha_{34} \\[5pt]
 &=& 0 \;.
\end{eqnarray*}
Hence the form $(-1)^{p+q}\, \alpha_{12} \wedge \alpha_{24} - (-1)^{r+s}\, \alpha_{13} \wedge \alpha_{34}$ defines a class in $H^{p+r+u-1,q+s+v-1}_{A}(A^{\bullet,\bullet})$.

\medskip

Now, suppose, for example, that $\mathfrak{a}_{12}=0$ in $H^{p,q}_{BC}(A^{\bullet,\bullet})$: let $\alpha_{12}=\del\delbar \xi$. Then we can take $\alpha_{13} = (-1)^{p+q}\, \xi \wedge \alpha_{23}$. Therefore we get
\begin{eqnarray*}
 (-1)^{p+q}\, \alpha_{12} \wedge \alpha_{24} - (-1)^{r+s}\, \alpha_{13} \wedge \alpha_{34}
 &=& (-1)^{p+q}\, \del\delbar \xi \wedge \alpha_{24} - (-1)^{p+q}\, \xi \wedge \del\delbar \alpha_{24} \\[5pt]
 &=& \del \left( (-1)^{p+q}\, \delbar\xi \wedge \alpha_{24} \right) + \delbar \left( \xi \wedge \del \alpha_{24} \right) \;,
\end{eqnarray*}
which is a null term in the Aeppli cohomology.

\medskip

Finally, suppose, for example, that $\alpha_{13} \in \ker\del + \ker\delbar$. Then the form $\alpha_{13}$ defines a class in $H^{p+r-1,q+s-1}_{A}(A^{\bullet,\bullet})$, and the term $\alpha_{13} \wedge \alpha_{34}$ is null in the quotient $H^{p+r+u-1, q+s+v-1}_{A}(A^{\bullet,\bullet}) \slash \big( \mathfrak{a}_{12} \cp H^{r+u-1, s+v-1}_{A}(A^{\bullet,\bullet}) + H^{p+r-1, q+s-1}_{A}(A^{\bullet,\bullet}) \cp \mathfrak{a}_{34} \big)$.
\end{proof}

Note that, the triple Aeppli-Bott-Chern Massey products belonging to a quotient of the Aeppli cohomology, one can not define higher order products straightforwardly as usual.

\begin{defi}
 Let $X$ be a complex manifold. The {\em triple Aeppli-Bott-Chern Massey products}, (shortly, {\em Aeppli-Bott-Chern-Massey products},) are defined as the triple Aeppli-Bott-Chern-Massey products of the bi-differential $\Z^2$-graded algebra $\left( \wedge^{\bullet,\bullet}X,\, \del,\, \delbar \right)$.
\end{defi}

\subsection{Aeppli-Bott-Chern-Massey products and formality}
We prove that triple Aeppli-Bott-Chern-Massey products provides obstructions to geometrically-Bott-Chern-formality for compact complex manifolds.

\medskip

The following lemma is straightforward.

\begin{lemma}\label{lemma:abc-massey-morphism}
 Let $f \colon \left( A^{\bullet,\bullet},\, \del_A,\, \delbar_A \right) \to \left( B^{\bullet,\bullet},\, \del_B,\, \delbar_B \right)$ be a morphism of bi-differential $\Z^2$-graded algebras. For any $\mathfrak{a}_{12} \in H^{p,q}_{BC}(A^{\bullet,\bullet})$, $\mathfrak{a}_{23} \in H^{r,s}_{BC}(A^{\bullet,\bullet})$, $\mathfrak{a}_{34} \in H^{u,v}_{BC}(A^{\bullet,\bullet})$ such that $\mathfrak{a}_{12} \cp \mathfrak{a}_{23} = 0$ and $\mathfrak{a}_{23} \cp \mathfrak{a}_{34} = 0$, it holds
 \begin{eqnarray*}
 \lefteqn{ \left(H_A(f)\right) \left( \left\langle \mathfrak{a}_{12},\, \mathfrak{a}_{23},\, \mathfrak{a}_{34} \right\rangle_{ABC} \right) } \\[5pt]
 &=& \left\langle \left(H_{BC}(f)\right)(\mathfrak{a}_{12}),\, \left(H_{BC}(f)\right)(\mathfrak{a}_{23}),\, \left(H_{BC}(f)\right)(\mathfrak{a}_{34}) \right\rangle_{ABC} \;.
 \end{eqnarray*}
\end{lemma}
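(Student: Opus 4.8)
The plan is to prove the identity by naturality: since $f$ is multiplicative and commutes with $\del$ and $\delbar$, applying $f$ to any admissible system of representatives for $\left\langle \mathfrak{a}_{12},\, \mathfrak{a}_{23},\, \mathfrak{a}_{34} \right\rangle_{ABC}$ produces an admissible system of representatives for $\left\langle H_{BC}(f)(\mathfrak{a}_{12}),\, H_{BC}(f)(\mathfrak{a}_{23}),\, H_{BC}(f)(\mathfrak{a}_{34}) \right\rangle_{ABC}$, after which the claim follows by tracking definitions.

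First I would record what the hypothesis on $f$ provides: being a morphism of bi-differential $\Z^2$-graded algebras, $f$ preserves the bigrading, satisfies $f(\omega\wedge\eta)=f(\omega)\wedge f(\eta)$, and obeys $f\circ\del_A=\del_B\circ f$ and $f\circ\delbar_A=\delbar_B\circ f$, hence also $f\circ\del_A\delbar_A=\del_B\delbar_B\circ f$. Consequently $f$ carries $\ker\del_A\cap\ker\delbar_A$ into $\ker\del_B\cap\ker\delbar_B$ and $\imm\del_A\delbar_A$ into $\imm\del_B\delbar_B$, so it induces $H_{BC}(f)$; likewise it carries $\ker\del_A\delbar_A$ into $\ker\del_B\delbar_B$ and $\imm\del_A+\imm\delbar_A$ into $\imm\del_B+\imm\delbar_B$, inducing $H_A(f)$. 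Multiplicativity moreover makes $H_{BC}(f)$ a ring homomorphism and $H_A(f)$ compatible with the $H_{BC}$-module structure of the Aeppli cohomologies.

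Next I would choose representatives $\alpha_{12},\alpha_{23},\alpha_{34}$ and auxiliary forms $\alpha_{13},\alpha_{24}$ as in the definition, so that $(-1)^{p+q}\,\alpha_{12}\wedge\alpha_{23}=\del\delbar\alpha_{13}$ and $(-1)^{r+s}\,\alpha_{23}\wedge\alpha_{34}=\del\delbar\alpha_{24}$. Applying $f$ and using multiplicativity together with $f\circ\del\delbar=\del\delbar\circ f$ yields $(-1)^{p+q}\,f(\alpha_{12})\wedge f(\alpha_{23})=\del\delbar f(\alpha_{13})$ and $(-1)^{r+s}\,f(\alpha_{23})\wedge f(\alpha_{34})=\del\delbar f(\alpha_{24})$. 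Since $f(\alpha_{12}),f(\alpha_{23}),f(\alpha_{34})$ represent $H_{BC}(f)(\mathfrak{a}_{12}),H_{BC}(f)(\mathfrak{a}_{23}),H_{BC}(f)(\mathfrak{a}_{34})$, this exhibits $f(\alpha_{13}),f(\alpha_{24})$ as an admissible pair of auxiliary forms for the triple product of the image classes. Hence that product is represented by $(-1)^{p+q}\,f(\alpha_{12})\wedge f(\alpha_{24})-(-1)^{r+s}\,f(\alpha_{13})\wedge f(\alpha_{34})=f\left((-1)^{p+q}\,\alpha_{12}\wedge\alpha_{24}-(-1)^{r+s}\,\alpha_{13}\wedge\alpha_{34}\right)$, which is exactly the image under $H_A(f)$ of the representative of $\left\langle \mathfrak{a}_{12},\, \mathfrak{a}_{23},\, \mathfrak{a}_{34} \right\rangle_{ABC}$.

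It remains to check that this representative-level equality descends to the two quotient Aeppli groups, i.e. that $H_A(f)$ sends the source indeterminacy into the target indeterminacy; this is the only point I expect to require genuine bookkeeping. Using that $H_A(f)$ is a morphism of $H_{BC}$-modules over the ring homomorphism $H_{BC}(f)$, the subspace $\mathfrak{a}_{12} \cp H^{r+u-1,s+v-1}_A(A^{\bullet,\bullet})+H^{p+r-1,q+s-1}_A(A^{\bullet,\bullet}) \cp \mathfrak{a}_{34}$ is carried into $H_{BC}(f)(\mathfrak{a}_{12}) \cp H^{r+u-1,s+v-1}_A(B^{\bullet,\bullet})+H^{p+r-1,q+s-1}_A(B^{\bullet,\bullet}) \cp H_{BC}(f)(\mathfrak{a}_{34})$, which is precisely the indeterminacy of the right-hand product. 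Therefore $H_A(f)$ induces a well-defined map between the two quotients under which the displayed classes correspond, giving the asserted equality. The whole argument is a naturality check: everything but the module-indeterminacy compatibility is immediate from multiplicativity and commutation with $\del$ and $\delbar$.
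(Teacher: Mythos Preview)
Your proof is correct and follows essentially the same naturality argument as the paper: choose representatives and auxiliary forms, push them through $f$ using multiplicativity and commutation with $\del,\delbar$, and recognize the result as an admissible defining system for the triple product of the image classes. You are in fact slightly more careful than the paper, which omits the explicit verification that $H_A(f)$ carries the source indeterminacy subgroup into the target one.
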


\begin{proof}
 Consider $\mathfrak{a}_{12}=\left[\alpha_{12}\right]$, $\mathfrak{a}_{23}=\left[\alpha_{23}\right]$, $\mathfrak{a}_{34}=\left[\alpha_{34}\right]$, and $(-1)^{p+q}\,\alpha_{12}\wedge\alpha_{23}=\del\delbar\alpha_{13}$, $(-1)^{r+s}\,\alpha_{23}\wedge\alpha_{34}=\del\delbar\alpha_{24}$. We have
 \begin{eqnarray*}
  \lefteqn{ \left(H_{A}(f)\right) \left( \left\langle \mathfrak{a}_{12},\, \mathfrak{a}_{23},\, \mathfrak{a}_{34} \right\rangle_{ABC} \right) }\\[5pt]
  &=& \left(H_{A}(f)\right) \left(\left[ (-1)^{p+q}\, \alpha_{12}\wedge\alpha_{24} - (-1)^{r+s}\, \alpha_{13}\wedge\alpha_{34} \right]\right) \\[5pt]
  &=& \left[ (-1)^{p+q}\, f(\alpha_{12})\wedge f(\alpha_{24}) - (-1)^{r+s}\, f(\alpha_{13})\wedge f(\alpha_{34}) \right] \\[5pt]
  &=& \left\langle \left[f(\alpha_{12})\right],\, \left[f(\alpha_{23})\right],\, \left[f(\alpha_{34})\right] \right\rangle_{ABC} \\[5pt]
  &=& \left\langle \left(H_{BC}(f)\right)(\mathfrak{a}_{12}),\, \left(H_{BC}(f)\right)(\mathfrak{a}_{23}),\, \left(H_{BC}(f)\right)(\mathfrak{a}_{34}) \right\rangle_{ABC} \;.
 \end{eqnarray*}
 Indeed, note that $(-1)^{p+q}\, \left(H_{BC}(f)\right)(\mathfrak{a}_{12}) \cp \left(H_{BC}(f)\right)(\mathfrak{a}_{23}) = (-1)^{p+q}\, \left[f(\alpha_{12})\right] \cp \left[f(\alpha_{23})\right] = (-1)^{p+q}\, \left[f(\alpha_{12}\wedge\alpha_{23})\right] = \left[\del\delbar f(\alpha_{13})\right]$ and $(-1)^{r+s}\, \left(H_{BC}(f)\right)(\mathfrak{a}_{23}) \cp \left(H_{BC}(f)\right)(\mathfrak{a}_{34}) = \left[\del\delbar f(\alpha_{24})\right]$.
\end{proof}

We now apply the above lemma to the case of geometrically-Bott-Chern-formal manifolds.

\begin{thm}\label{thm:ABC-massey-geom-BC}
 Triple Aeppli-Bott-Chern-Massey products vanish on compact complex geometrically-Bott-Chern-formal manifolds.
\end{thm}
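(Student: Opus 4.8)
The plan is to deduce the theorem from Lemma~\ref{lemma:abc-massey-morphism} applied to the structural inclusion attached to a geometrically-Bott-Chern-formal metric. Fix such a metric $g$ on $X$, so that $\iota := \iota_{BC}^{g} \colon \left( \ker\Delta_{BC}^{g},\, 0,\, 0 \right) \hookrightarrow \left( \wedge^{\bullet,\bullet}X,\, \del,\, \delbar \right)$ is a morphism of bi-differential $\Z^2$-graded algebras with $H_{BC}(\iota)$ an isomorphism. First I would record an elementary observation: since $\del^{*}\del + \delbar^{*}\delbar$ appears among the summands of $\Delta_{BC}^{g}$, on a compact manifold every $\Delta_{BC}^{g}$-harmonic form is $\del$-closed and $\delbar$-closed, so the restrictions of $\del$ and $\delbar$ to $\ker\Delta_{BC}^{g}$ genuinely vanish. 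Consequently, in the bi-differential graded algebra $\left( \ker\Delta_{BC}^{g},\, 0,\, 0 \right)$ one has $\imm\del\delbar = \{0\}$ and $\imm\del+\imm\delbar = \{0\}$, whence $H_{BC}\!\left( \ker\Delta_{BC}^{g},\, 0,\, 0 \right) = \ker\Delta_{BC}^{g} = H_{A}\!\left( \ker\Delta_{BC}^{g},\, 0,\, 0 \right)$, with the product being simply the wedge product inherited from $\wedge^{\bullet,\bullet}X$ (this is where geometrically-Bott-Chern-formality is used: $\ker\Delta_{BC}^{g}$ is closed under $\wedge$).

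Next, I would take arbitrary classes $\mathfrak{b}_{12},\, \mathfrak{b}_{23},\, \mathfrak{b}_{34} \in H_{BC}(X)$ with $\mathfrak{b}_{12}\cp\mathfrak{b}_{23} = 0$ and $\mathfrak{b}_{23}\cp\mathfrak{b}_{34} = 0$, and pull them back through $H_{BC}(\iota)$. Since $\iota$ is an algebra morphism, $H_{BC}(\iota)$ is an algebra morphism; being bijective, it is an algebra isomorphism. Hence $\mathfrak{a}_{ij} := \left( H_{BC}(\iota) \right)^{-1}(\mathfrak{b}_{ij})$ are classes in $H_{BC}\!\left( \ker\Delta_{BC}^{g},\, 0,\, 0 \right)$ with $\mathfrak{a}_{12}\cp\mathfrak{a}_{23} = 0$ and $\mathfrak{a}_{23}\cp\mathfrak{a}_{34} = 0$. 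By the first step these vanishings mean exactly $\alpha_{12}\wedge\alpha_{23} = 0$ and $\alpha_{23}\wedge\alpha_{34} = 0$ as forms, for the Bott-Chern-harmonic representatives $\alpha_{ij}$. Therefore one may choose the primitives $\alpha_{13} = 0$ and $\alpha_{24} = 0$ in the definition of $\left\langle \mathfrak{a}_{12},\, \mathfrak{a}_{23},\, \mathfrak{a}_{34} \right\rangle_{ABC}$, which is then represented by $(-1)^{p+q}\,\alpha_{12}\wedge 0 - (-1)^{r+s}\,0\wedge\alpha_{34} = 0$, hence vanishes.

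Finally I would invoke Lemma~\ref{lemma:abc-massey-morphism} for $f = \iota$:
\begin{align*}
\left\langle \mathfrak{b}_{12},\, \mathfrak{b}_{23},\, \mathfrak{b}_{34} \right\rangle_{ABC}
&= \left\langle \left( H_{BC}(\iota) \right)(\mathfrak{a}_{12}),\, \left( H_{BC}(\iota) \right)(\mathfrak{a}_{23}),\, \left( H_{BC}(\iota) \right)(\mathfrak{a}_{34}) \right\rangle_{ABC} \\
&= \left( H_{A}(\iota) \right)\!\left( \left\langle \mathfrak{a}_{12},\, \mathfrak{a}_{23},\, \mathfrak{a}_{34} \right\rangle_{ABC} \right)
= 0
\end{align*}
in the relevant quotient of $H_{A}(X)$. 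As $\mathfrak{b}_{12},\, \mathfrak{b}_{23},\, \mathfrak{b}_{34}$ were arbitrary with vanishing consecutive products, all triple Aeppli-Bott-Chern-Massey products of $X$ vanish.

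I do not expect a genuine obstacle here; the only point needing a little care is the implication ``$\mathfrak{a}_{12}\cp\mathfrak{a}_{23} = 0$ in $H_{BC}\!\left( \ker\Delta_{BC}^{g},\, 0,\, 0 \right)$ forces $\alpha_{12}\wedge\alpha_{23} = 0$ on the nose'', which rests on two facts established above: that $\imm\del\delbar$ is trivial in the harmonic subalgebra (equivalently, that Bott-Chern-harmonic forms are $\del$- and $\delbar$-closed), and that $H_{BC}(\iota)$ is multiplicative, not merely linear. Everything else is a direct application of the lemma.
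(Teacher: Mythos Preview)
Your proof is correct and follows essentially the same approach as the paper's own proof: both pull back the given Bott-Chern classes through the algebra isomorphism $H_{BC}(\iota_{BC}^{g})$, observe that triple Aeppli-Bott-Chern-Massey products are trivially zero in the bi-differential algebra $\left(\ker\Delta_{BC}^{g},\,0,\,0\right)$ with vanishing differentials, and then push forward via Lemma~\ref{lemma:abc-massey-morphism}. Your version is slightly more explicit in justifying why $\del$ and $\delbar$ restrict to zero on $\ker\Delta_{BC}^{g}$ and in spelling out the choice $\alpha_{13}=\alpha_{24}=0$, but the argument is the same.
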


\begin{proof}
 Let $g$ be a geometrically-Bott-Chern-formal metric on the compact complex manifold $X$. In particular, $\ker \Delta_{BC}^{g}$ has a structure of algebra and the inclusion $\iota_{BC}^{g} \colon \left( \ker\Delta_{BC}^{g} ,\, 0 ,\, 0 \right) \to \left( \wedge^{\bullet,\bullet}X ,\, \del ,\, \delbar \right)$ is a morphism of algebras yielding the isomorphism $H_{BC}\left(\iota_{BC}^{g}\right)$.
 
 We are hence reduced to prove the following claim. Let $X$ be a compact complex manifold. Suppose that there exists a morphism $f \colon \left( M^{\bullet,\bullet},\, 0,\, 0 \right) \to \left( \wedge^{\bullet,\bullet}X,\, \del,\, \delbar \right)$ of bi-differential $\Z^2$-graded algebras such that $H_{BC}(f)$ is an isomorphism. Then the triple Aeppli-Bott-Chern-Massey products of $X$ vanish.
 
 Indeed, for any $\mathfrak{a}_{12} \in H^{p,q}_{BC}(X)$, $\mathfrak{a}_{23} \in H^{r,s}_{BC}(X)$, $\mathfrak{a}_{34} \in H^{u,v}_{BC}(X)$ such that $\mathfrak{a}_{12} \cp \mathfrak{a}_{23} = 0$ and $\mathfrak{a}_{23} \cp \mathfrak{a}_{34} = 0$, by Lemma \ref{lemma:abc-massey-morphism} one has
 \begin{eqnarray*}
  \left\langle \mathfrak{a}_{12}, \mathfrak{a}_{23}, \mathfrak{a}_{34} \right\rangle_{ABC} &=& \left( H_{A}(f) \right) \left\langle H_{BC}(f)^{-1} \left(\mathfrak{a}_{12}\right), H_{BC}(f)^{-1} \left(\mathfrak{a}_{23}\right), H_{BC}(f)^{-1} \left(\mathfrak{a}_{34}\right) \right\rangle \\[5pt]
  &=& \left( H_{A}(f) \right) (0) \;=\; 0 \;.
 \end{eqnarray*}
 Indeed, note that triple Aeppli-Bott-Chern-Massey products vanish in $\Z^2$-graded algebras with zero differentials.
\end{proof}

\section{Examples}\label{sec:examples}
In this section, we provide some examples of (non-)geometrically-Bott-Chern-formal manifolds.

\subsection{Iwasawa manifold}
We show a non-zero Aeppli-Bott-Chern-Massey product on the Iwasawa manifold, which is one of the simplest example of non-K\"ahler complex nilmanifolds. 
Hence, by Theorem \ref{thm:ABC-massey-geom-BC}, we get the following.

\begin{prop}
The Iwasawa manifold is not geometrically-Bott-Chern-formal.
\end{prop}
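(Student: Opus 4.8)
The plan is to exhibit an explicit non-vanishing triple Aeppli-Bott-Chern-Massey product on the Iwasawa manifold $X = \Gamma\backslash G$, where $G$ is the complex Heisenberg group; once this is done, Theorem~\ref{thm:ABC-massey-geom-BC} immediately forces $X$ not to be geometrically-Bott-Chern-formal. First I would fix the standard left-invariant coframe $\{\varphi^1,\varphi^2,\varphi^3\}$ of $(1,0)$-forms satisfying the structure equations $\del\varphi^1=\del\varphi^2=0$, $\del\varphi^3=-\varphi^1\wedge\varphi^2$, and $\delbar\varphi^i=0$ for all $i$. Since $\varphi^1,\varphi^2$ and their conjugates are $\del$- and $\delbar$-closed, the classes $\mathfrak{a}_{12}:=[\varphi^1]$, $\mathfrak{a}_{23}:=[\varphi^2]$, $\mathfrak{a}_{34}:=[\bar\varphi^1]$ (or some similar triple; the precise choice is part of the computation) all lie in appropriate Bott-Chern cohomology groups $H^{p,q}_{BC}(X)$, which by Nomizu-type / Sakane-type results can be computed at the level of the invariant double complex. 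The two required products $\mathfrak{a}_{12}\cp\mathfrak{a}_{23}$ and $\mathfrak{a}_{23}\cp\mathfrak{a}_{34}$ vanish in Bott-Chern cohomology precisely because $\varphi^1\wedge\varphi^2=-\del\varphi^3=-\del\delbar(\text{something})$ up to adjusting by a $\delbar$-exact term, so one can write $(-1)^{p+q}\varphi^1\wedge\varphi^2=\del\delbar\alpha_{13}$ and similarly $(-1)^{r+s}\varphi^2\wedge\bar\varphi^1=\del\delbar\alpha_{24}$ with explicit invariant primitives $\alpha_{13}$, $\alpha_{24}$.

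Next I would assemble the representative $(-1)^{p+q}\,\alpha_{12}\wedge\alpha_{24}-(-1)^{r+s}\,\alpha_{13}\wedge\alpha_{34}$ of the triple product, which by the coherence proof in \S\ref{subsec:massey} defines a class in the quotient of $H^{\bullet,\bullet}_A(X)$ by $\mathfrak{a}_{12}\cp H^{\bullet,\bullet}_A(X)+H^{\bullet,\bullet}_A(X)\cp\mathfrak{a}_{34}$. The crucial step is to show this class is non-zero in that quotient. This amounts to two sub-tasks: (a) verify the representative is not $\del$-exact-plus-$\delbar$-exact, i.e.\ not zero in $H^{\bullet,\bullet}_A(X)$, which again reduces to linear algebra on the finite-dimensional invariant complex (using Schweitzer's Hodge theory or the explicit description of $H^{\bullet,\bullet}_A$ of the Iwasawa manifold available in \cite{schweitzer} or \cite{angella-tomassini-3}); and (b) check that it does not lie in the indeterminacy subspace $\mathfrak{a}_{12}\cp H^{\bullet,\bullet}_A(X)+H^{\bullet,\bullet}_A(X)\cp\mathfrak{a}_{34}$, which one does by writing out that subspace explicitly — it is spanned by wedge products of $\varphi^1$ (resp.\ $\bar\varphi^1$) with Aeppli-harmonic forms of the complementary bidegree — and observing the representative has a component outside it.

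The main obstacle I anticipate is sub-task (b): controlling the indeterminacy subspace. Unlike ordinary Massey products, the Aeppli-Bott-Chern-Massey product lives in a quotient that can be fairly large, and on a manifold with as much cohomology as the Iwasawa manifold one must be careful that the obvious candidate representative is not accidentally absorbed into $\mathfrak{a}_{12}\cp H_A + H_A\cp\mathfrak{a}_{34}$. The way to handle this cleanly is to choose the three input classes so that the target bidegree $(p+r+u-1,\,q+s+v-1)$ is one where $H^{\bullet,\bullet}_A(X)$ is small and the indeterminacy is manifestly proper — a bidegree like $(2,1)$ or $(1,2)$ is a good guess. A secondary, purely bookkeeping, difficulty is keeping track of the sign conventions $(-1)^{p+q}$, $(-1)^{r+s}$ from the definition; these do not affect vanishing but must be carried correctly so that the final class is genuinely the one claimed. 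Once a concrete triple with non-zero product is pinned down, the proposition follows formally from Theorem~\ref{thm:ABC-massey-geom-BC}.
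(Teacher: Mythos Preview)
Your overall strategy --- exhibit a non-zero Aeppli-Bott-Chern-Massey product and invoke Theorem~\ref{thm:ABC-massey-geom-BC} --- is exactly what the paper does. But the specific triple you propose does not work, and the reason is a genuine conceptual gap rather than a bookkeeping issue.

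For $\langle\mathfrak{a}_{12},\mathfrak{a}_{23},\mathfrak{a}_{34}\rangle_{ABC}$ to be defined, the products $\mathfrak{a}_{12}\cp\mathfrak{a}_{23}$ and $\mathfrak{a}_{23}\cp\mathfrak{a}_{34}$ must vanish in \emph{Bott-Chern} cohomology, i.e.\ the representatives must be $\del\delbar$-exact, not merely $\del$-exact or $\de$-exact. With your choices $\mathfrak{a}_{12}=[\varphi^1]$ and $\mathfrak{a}_{23}=[\varphi^2]$, the product $\varphi^1\wedge\varphi^2=-\del\varphi^3$ is indeed $\del$-exact, but it is a non-zero $(2,0)$-form, and $\imm\del\delbar$ is trivial in bidegree $(2,0)$ since there are no $(1,-1)$-forms. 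Hence $[\varphi^1\wedge\varphi^2]\neq 0$ in $H^{2,0}_{BC}(X)$. Your phrase ``$-\del\varphi^3=-\del\delbar(\text{something})$ up to adjusting by a $\delbar$-exact term'' conflates these two kinds of exactness; no such adjustment exists. Likewise $[\varphi^2\wedge\bar\varphi^1]\neq 0$ in $H^{1,1}_{BC}(X)$. So the triple product is not even defined for this choice, and no ``similar'' triple of degree-one classes will fare better for the same reason.

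The paper sidesteps this by taking higher-degree inputs: $\mathfrak{a}_{12}=[\varphi^1\wedge\varphi^2]\in H^{2,0}_{BC}$, $\mathfrak{a}_{23}=[\bar\varphi^1\wedge\bar\varphi^2]\in H^{0,2}_{BC}$, $\mathfrak{a}_{34}=[\bar\varphi^1]\in H^{0,1}_{BC}$. Now $\mathfrak{a}_{12}\cp\mathfrak{a}_{23}$ is represented by the $(2,2)$-form $\varphi^1\wedge\varphi^2\wedge\bar\varphi^1\wedge\bar\varphi^2=\del\delbar(-\varphi^3\wedge\bar\varphi^3)$, which \emph{is} genuinely $\del\delbar$-exact because both structure equations contribute; and $\mathfrak{a}_{23}\cp\mathfrak{a}_{34}=0$ already at the form level since $\bar\varphi^1\wedge\bar\varphi^1=0$, so $\alpha_{24}=0$. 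The representative of the triple product is then $\varphi^3\wedge\bar\varphi^1\wedge\bar\varphi^3$ in $H^{1,2}_A$, and the indeterminacy (your sub-task~(b)) reduces to $H^{1,1}_A(X)\cp[\bar\varphi^1]$, which one writes out explicitly and sees does not contain this class. So the obstacle you flagged as primary --- controlling the indeterminacy --- turns out to be routine once the right triple is in hand; the real difficulty, which your proposal misses, is finding classes whose cup products are honestly $\del\delbar$-exact.
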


\begin{proof}
The {\em Iwasawa manifold} is $\mathbb{I}_3 := \left. \mathbb{H}\left(3;\Z\left[\im\right]\right) \right\backslash \mathbb{H}(3;\C)$, where the connected simply-connected complex $2$-step nilpotent Lie group
$$
\mathbb{H}(3;\mathbb{C}) := \left\{
\left(
\begin{array}{ccc}
 1 & z^1 & z^3 \\
 0 &  1  & z^2 \\
 0 &  0  &  1
\end{array}
\right) \in \mathrm{GL}(3;\mathbb{C}) \st z^1,\,z^2,\,z^3 \in\C \right\}
\;,
$$
endowed with the product induced by matrix multiplication, is the $3$-dimensional Heisenberg group over $\mathbb{C}$, and $\mathbb{H}\left(3;\Z\left[\im\right]\right):=\mathbb{H}(3;\C)\cap\GL\left(3;\Z\left[\im\right]\right)$ is a lattice in $\mathbb{H}(3;\C)$.

One gets that $\mathbb{I}_3$ is a $3$-dimensional holomorphically-parallelizable complex nilmanifold, endowed with a $\mathbb{H}(3;\C)$-left-invariant complex structure inherited by the standard complex structure on $\mathbb{H}(3;\mathbb{C})$.

A $\mathbb{H}(3;\C)$-left-invariant co-frame for the space of $(1,0)$-forms on $\mathbb{I}_3$ is given by
$$
\left\{
\begin{array}{rcl}
 \varphi^1 &:=& \de z^1 \\[5pt]
 \varphi^2 &:=& \de z^2 \\[5pt]
 \varphi^3 &:=& \de z^3-z^1\,\de z^2
\end{array}
\right. \;,
$$
and the corresponding structure equations are
$$
\left\{
\begin{array}{rcl}
 \de\varphi^1 &=& 0 \\[5pt]
 \de\varphi^2 &=& 0 \\[5pt]
 \de\varphi^3 &=& -\varphi^1\wedge\varphi^2
\end{array}
\right. \;.
$$

\medskip

Consider
 $$
 \mathfrak{a}_{12} \;:=\; \left[ \varphi^1 \wedge \varphi^2 \right] \;\in\; H^{2,0}_{BC}(\mathbb{I}_3) \;,
 \qquad
 \mathfrak{a}_{23} \;:=\; \left[ \bar\varphi^1 \wedge \bar\varphi^2 \right] \;\in\; H^{0,2}_{BC}(\mathbb{I}_3) \;,
 $$
 $$
 \mathfrak{a}_{34} \;:=\; \left[ \bar\varphi^1 \right] \;\in\; H^{0,1}_{BC}(\mathbb{I}_3) \;.
 $$
Since $\del\delbar \left( - \varphi^3 \wedge \bar\varphi^3 \right) = \varphi^1 \wedge \varphi^2 \wedge \bar\varphi^1 \wedge \bar\varphi^2$, we take
 $$ \alpha_{13} \;=\; - \varphi^3 \wedge \bar\varphi^3 \qquad \text{ and } \qquad \alpha_{24} \;=\; 0 \;. $$
By noting that, \cite[\S1.c]{schweitzer},
 \begin{eqnarray*}
 \lefteqn{\frac{H^{1, 2}_{A}(X)}{H^{1,1}_{A}(X) \cp \mathfrak{a}_{34}} }\\[5pt]
 &=& \left\langle \left[ \varphi^{1}\wedge\bar\varphi^{2}\wedge\bar\varphi^{3} \right] ,\; \left[ \varphi^{2}\wedge\bar\varphi^{2}\wedge\bar\varphi^{3} \right] ,\; \left[ \varphi^3 \wedge \bar\varphi^1 \wedge \bar\varphi^3 \right] ,\; \left[ \varphi^3 \wedge \bar\varphi^2 \wedge \bar\varphi^3 \right] \right\rangle \;, \end{eqnarray*}
we get that
 $$ \mathfrak{a}_{1234} \;:=\; \left\langle \mathfrak{a}_{12},\, \mathfrak{a}_{23},\, \mathfrak{a}_{34} \right\rangle_{ABC} \;=\; \left[ \alpha_{12}\wedge\alpha_{24}-\alpha_{13}\wedge\alpha_{34} \right] \;=\; \left[ \varphi^3 \wedge \bar\varphi^1 \wedge \bar\varphi^3 \right] $$
is a non-trivial Aeppli-Bott-Chern-Massey product.
\end{proof}

\subsection{Compact complex surfaces}
Compact complex non-K\"ahler surfaces diffeomorphic to solvmanifolds are studied by K. Hasegawa in \cite{hasegawa-jsg}.
They are Inoue surface of type $\mathcal{S}_M$, primary Kodaira surface, secondary Kodaira surface, and Inoue surface of type $\mathcal{S}^\pm$, and are endowed with left-invariant complex structures.
In \cite{angella-dloussky-tomassini}, their Dolbeault and Bott-Chern cohomology is studied.
It turns out that the Dolbeault and Bott-Chern cohomologies of such manifolds can be fully recovered by the sub-double-complex of left-invariant forms, see \cite[Theorem 4.1]{angella-dloussky-tomassini}. More precisely, if the compact complex surface $X$ is diffeomorphic to the solvmanifold $\left. \Gamma \middle\backslash G \right.$, then the inclusion $\iota \colon \left( \wedge^{\bullet,\bullet}\mathfrak{g}^*,\, \del,\, \delbar \right) \to \left( \wedge^{\bullet,\bullet}X,\, \del,\, \delbar \right)$ yields the isomorphisms $H_{\delbar}(\iota)$ and $H_{BC}(\iota)$, where $\mathfrak{g}$ denotes the Lie algebra associated to $G$.

\medskip

We show here that compact complex surfaces diffeomorphic to solvmanifolds are geometrically-Bott-Chern-formal.
(Furthermore, except in the case of primary Kodaira surface, they are also geometrically-Dolbeault-formal.
Here, by geometrically-Dolbeault-formal, we mean that $X$ admits a Hermitian metric with respect to which $\ker\Delta^{g}_{\delbar}$ is an algebra, where $\Delta^{g}_{\delbar}:=\left[\delbar,\delbar^*\right]$ is the Dolbeault Laplacian. As for the case of a primary Kodaira surface, it is not geometrically-Dolbeault-formal. This follows from \cite[Proposition 4.2]{cordero-fernandez-ugarte}, stating that any nilmanifold with first Betti number $b_1 = 3$ endowed with a nilpotent complex structure has a nonzero triple Dolbeault-Massey product.)

\begin{prop}\label{prop:formality-surfaces}
 Let $X$ be either an Inoue surface of type $\mathcal{S}_M$, or a primary Kodaira surface, or an Inoue surface of type $\mathcal{S}^{\pm}$, or a secondary Kodaira surface. Then $X$ is geometrically-Bott-Chern-formal.
\end{prop}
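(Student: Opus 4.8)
The plan is to reduce, for each of the four surfaces, the existence of a geometrically-Bott-Chern-formal metric to a finite linear-algebra verification on the sub-double-complex of left-invariant forms, and then to carry it out with an explicit left-invariant Hermitian metric.

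Write $X=\Gamma\backslash G$ with its left-invariant complex structure, let $\g$ denote the Lie algebra of $G$, and let $\iota\colon\left(\wedge^{\bullet,\bullet}\g^*,\,\del,\,\delbar\right)\hookrightarrow\left(\wedge^{\bullet,\bullet}X,\,\del,\,\delbar\right)$ be the inclusion of left-invariant forms, so that $H_{BC}(\iota)$ is an isomorphism by \cite[Theorem 4.1]{angella-dloussky-tomassini}. Fix \emph{any} left-invariant Hermitian metric $g$ on $X$. Since $g$, hence $*_{g}$, is left-invariant, the operators $\del^{*}$, $\delbar^{*}$, and therefore $\Delta_{BC}^{g}$, preserve $\wedge^{\bullet,\bullet}\g^*$; thus $\ker\Delta_{BC}^{g}\cap\wedge^{\bullet,\bullet}\g^*$ is the space of left-invariant $\Delta_{BC}^{g}$-harmonic forms, which is isomorphic to $H_{BC}(\wedge^{\bullet,\bullet}\g^*)$ by the linear-algebra version of Schweitzer's Hodge decomposition \cite{schweitzer} applied to the finite-dimensional bicomplex $\wedge^{\bullet,\bullet}\g^*$. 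Combining this with $H_{BC}(\iota)$ and with the global isomorphism $\ker\Delta_{BC}^{g}\cong H_{BC}(X)$ one gets $\dim_{\C}\left(\ker\Delta_{BC}^{g}\cap\wedge^{\bullet,\bullet}\g^*\right)=\dim_{\C}\ker\Delta_{BC}^{g}$; as the former space sits inside the latter, the two coincide, i.e. for a left-invariant metric every Bott-Chern-harmonic form on $X$ is automatically left-invariant. Hence $g$ is geometrically-Bott-Chern-formal if and only if the finite-dimensional space $\ker\Delta_{BC}^{g}\cap\wedge^{\bullet,\bullet}\g^*$ is closed under the wedge product — a check entirely internal to $\wedge^{\bullet,\bullet}\g^*$.

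The next step is, for each of the four surface types, to take the left-invariant coframe $\{\varphi^{1},\varphi^{2}\}$ of $(1,0)$-forms and its structure equations as listed in \cite{hasegawa-jsg, angella-dloussky-tomassini}, and to equip $X$ with the left-invariant Hermitian metric $g$ making $\{\varphi^{1},\varphi^{2}\}$ unitary. From the structure equations one reads off $\del$, $\delbar$ and their $g$-adjoints on $\wedge^{\bullet,\bullet}\g^*$, and then computes $\ker\Delta_{BC}^{g}\cap\wedge^{\bullet,\bullet}\g^*$ directly from the definition of $\Delta_{BC}^{g}$. One finds in each case an explicit basis of $\ker\Delta_{BC}^{g}\cap\wedge^{\bullet,\bullet}\g^*$ consisting of coframe monomials $\varphi^{I}\wedge\bar\varphi^{J}$ — up to, in a few bidegrees, simple linear combinations of such — together with the constant in bidegree $(0,0)$ and the volume form $\varphi^{1}\wedge\varphi^{2}\wedge\bar\varphi^{1}\wedge\bar\varphi^{2}$ in bidegree $(2,2)$; the latter, and in fact all of $\wedge^{2,2}\g^*$, is automatically harmonic, since $\dim_{\C}H^{2,2}_{BC}(X)=\dim_{\C}\wedge^{2,2}\g^*=1$. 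Finally one verifies closure under $\wedge$: because $\dim_{\C}X=2$, a product of harmonic forms of total degree $>4$ vanishes, a product landing in bidegree $(2,2)$ lies in $\wedge^{2,2}\g^*$ hence is harmonic, and products with the constant are trivial; so only the products landing in bidegrees $(1,1)$, $(2,0)$, $(0,2)$, $(2,1)$, $(1,2)$ need to be inspected, and since the harmonic forms are spanned by coframe monomials, each such product is either zero or again one of the listed harmonic forms. This is checked directly in the four cases, proving geometrically-Bott-Chern-formality. (The same metrics and computations also yield geometrically-Dolbeault-formality in the three cases other than the primary Kodaira surface.)

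The main obstacle is the computation of $\ker\Delta_{BC}^{g}$ itself: the Bott-Chern Laplacian is a fourth-order operator, and its kernel is far less transparent than that of the de Rham or Dolbeault Laplacian, so the computation cannot be avoided, and its outcome genuinely depends on the chosen $g$. The delicate case is the primary Kodaira surface, which is not geometrically-Dolbeault-formal — its nilpotent complex structure has first Betti number $3$, cf. \cite[Proposition 4.2]{cordero-fernandez-ugarte} — so that here the discrepancy between $\Delta_{BC}^{g}$ and $\Delta^{g}_{\delbar}$ is essential; for instance $\varphi^{1}\wedge\bar\varphi^{1}$ turns out to be $\Delta_{BC}^{g}$-harmonic for the unitary-coframe metric although it is not $\del^{*}$-closed, and one must verify that this particular metric produces a space of Bott-Chern-harmonic forms that is closed under products.
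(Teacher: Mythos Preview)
Your proposal is correct and follows essentially the same approach as the paper: fix a left-invariant Hermitian metric, reduce to the finite-dimensional invariant bicomplex (you justify this reduction via a dimension count, whereas the paper simply quotes \cite[Theorem 4.1]{angella-dloussky-tomassini} and \cite[Tables 1--2]{angella-dloussky-tomassini}), compute $\ker\Delta_{BC}^{g}$ explicitly in terms of the coframe $\{\varphi^1,\varphi^2\}$, and verify that it is closed under wedge. The paper records the outcome of this computation by writing $\ker\Delta_{BC}^{g}=\wedge^{\bullet,\bullet}(\ldots)$ as the exterior algebra on an explicit list of generators, which is the same closure verification you describe, packaged differently.
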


\begin{proof}
Explicit representatives for the cohomologies are provided in \cite[Table 1, Table 2]{angella-dloussky-tomassini}. More precisely, a left-invariant Hermitian metric $g$ is fixed.
The harmonic representatives with respect to such a metric, and in terms of a left-invariant orthonormal co-frame $\left\{ \varphi^1, \varphi^2 \right\}$ for the holomorphic co-tangent bundle, are the following:
\begin{itemize}
 \item for Inoue surface of type $\mathcal{S}_M$:
  $$ \ker\Delta_{\delbar}^{g} \;=\; \wedge^{\bullet,\bullet} \left( \C\left\langle 1 \right\rangle \oplus \C\left\langle \bar\varphi^{2} \right\rangle \oplus \C\left\langle \varphi^1\wedge\varphi^2\wedge\bar\varphi^1 \right\rangle \right) \;, $$
  and
  \begin{eqnarray*}
   \ker\Delta_{BC}^{g} &=& \wedge^{\bullet,\bullet} \left( \C\left\langle 1 \right\rangle \oplus \C\left\langle \varphi^2\wedge\bar\varphi^{2} \right\rangle \oplus \C\left\langle \varphi^1\wedge\varphi^2\wedge\bar\varphi^1 \right\rangle \right. \\[5pt]
   && \left. \oplus \C\left\langle \varphi^1\wedge\bar\varphi^1\wedge\bar\varphi^2 \right\rangle \oplus \C\left\langle \varphi^1\wedge\varphi^2\wedge\bar\varphi^1\wedge\bar\varphi^2 \right\rangle \right) \;;
  \end{eqnarray*}

 \item for primary Kodaira surface:
  \begin{eqnarray*}
   \ker\Delta_{\delbar}^{g}
    &=& \C\left\langle 1 \right\rangle \oplus \C\left\langle \varphi^1 \right\rangle \oplus \C\left\langle \bar\varphi^1,\, \bar\varphi^2 \right\rangle \\[5pt]
    && \oplus \C\left\langle \varphi^1\wedge\varphi^2 \right\rangle \oplus \C\left\langle \varphi^1\wedge\bar\varphi^2,\, \varphi^2\wedge\bar\varphi^1 \right\rangle \\[5pt]
    && \oplus \C\left\langle \bar\varphi^1\wedge\bar\varphi^2 \right\rangle \oplus \C\left\langle \varphi^1\wedge\varphi^2\wedge\bar\varphi^1,\, \varphi^1\wedge\varphi^2\wedge\bar\varphi^2 \right\rangle \\[5pt]
    && \oplus \C\left\langle \varphi^2\wedge\bar\varphi^2\wedge\bar\varphi^2 \right\rangle \oplus \C\left\langle \varphi^1\wedge\varphi^2\wedge\bar\varphi^1\wedge\bar\varphi^2 \right\rangle \;,
  \end{eqnarray*}
  and
  \begin{eqnarray*}
   \ker\Delta_{BC}^{g} &=& \wedge^{\bullet,\bullet} \left( \C\left\langle 1 \right\rangle \oplus \C\left\langle \varphi^1 \right\rangle \oplus \C\left\langle \bar\varphi^1 \right\rangle \right. \\[5pt]
    && \left. \oplus \C\left\langle \varphi^1\wedge\varphi^2 \right\rangle \oplus \C\left\langle \varphi^1\wedge\bar\varphi^2,\, \varphi^2\wedge\bar\varphi^1 \right\rangle \right. \\[5pt]
   && \left. \oplus \C\left\langle \bar\varphi^1\wedge\bar\varphi^2 \right\rangle \oplus \C\left\langle \varphi^1\wedge\varphi^2\wedge\bar\varphi^2 \right\rangle \right. \\[5pt]
    && \left. \oplus \C\left\langle \varphi^2\wedge\bar\varphi^1\wedge\bar\varphi^2 \right\rangle \right) \;;
  \end{eqnarray*}
  note in particular that
  $$ \varphi^1\in\ker\Delta_{\delbar}^{g} \text{ and } \bar\varphi^1\in\ker\Delta_{\delbar}^{g} \quad \text{ but } \quad \varphi^1\wedge\bar\varphi^1 \not\in\ker\Delta_{\delbar}^{g} \;; $$

 \item for secondary Kodaira surface:
  $$ \ker\Delta_{\delbar}^{g} \;=\; \wedge^{\bullet,\bullet} \left( \C\left\langle 1 \right\rangle \oplus \C\left\langle \bar\varphi^{2} \right\rangle \oplus \C\left\langle \varphi^1\wedge\varphi^2\wedge\bar\varphi^1 \right\rangle \right) \;, $$
  and
  \begin{eqnarray*}
   \ker\Delta_{BC}^{g} &=& \wedge^{\bullet,\bullet} \left( \C\left\langle 1 \right\rangle \oplus \C\left\langle \varphi^1\wedge\bar\varphi^{1} \right\rangle \oplus \C\left\langle \varphi^1\wedge\varphi^2\wedge\bar\varphi^1 \right\rangle \right. \\[5pt]
   && \left. \oplus \C\left\langle \varphi^1\wedge\bar\varphi^1\wedge\bar\varphi^2 \right\rangle \oplus \C\left\langle \varphi^1\wedge\varphi^2\wedge\bar\varphi^1\wedge\bar\varphi^2 \right\rangle \right) \;;
  \end{eqnarray*}

 \item for Inoue surface of type $\mathcal{S}^\pm$:
  $$ \ker\Delta_{\delbar}^{g} \;=\; \wedge^{\bullet,\bullet} \left( \C\left\langle 1 \right\rangle \oplus \C\left\langle \bar\varphi^{2} \right\rangle \oplus \C\left\langle \varphi^1\wedge\varphi^2\wedge\bar\varphi^1 \right\rangle \right) \;, $$
  and
  \begin{eqnarray*}
   \ker\Delta_{BC}^{g} &=& \wedge^{\bullet,\bullet} \left( \C\left\langle 1 \right\rangle \oplus \C\left\langle \varphi^2\wedge\bar\varphi^{2} \right\rangle \oplus \C\left\langle \varphi^1\wedge\varphi^2\wedge\bar\varphi^1 \right\rangle \right. \\[5pt]
   && \left. \oplus \C\left\langle \varphi^1\wedge\bar\varphi^1\wedge\bar\varphi^2 \right\rangle \oplus \C\left\langle \varphi^1\wedge\varphi^2\wedge\bar\varphi^1\wedge\bar\varphi^2 \right\rangle \right) \;.
  \end{eqnarray*}
\end{itemize}
\end{proof}

\medskip

As an explicit example of a surface in class $\text{VII}$, in \cite[\S2.1]{angella-dloussky-tomassini}, the Calabi-Eckmann structure \cite{calabi-eckmann} on the Hopf surface is studied.
From the computations there, one has the following result.

\begin{thm}
 Let $X$ be the Hopf surface endowed with the Calabi-Eckmann complex structure. Then $X$ is both geometrically-Dolbeault-formal and geometrically-Bott-Chern-formal.
\end{thm}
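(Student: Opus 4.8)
The plan is to run the strategy of the proof of Proposition~\ref{prop:formality-surfaces}: to exhibit an explicit left-invariant Hermitian metric $g$ on $X$ and to check, using the Hodge-theoretic computations of \cite[\S2.1]{angella-dloussky-tomassini}, that both $\ker\Delta_{\delbar}^{g}$ and $\ker\Delta_{BC}^{g}$ are closed under the wedge product. By the definitions this is all that is needed: the inclusions $\left(\ker\Delta_{\delbar}^{g},\,0,\,0\right)\hookrightarrow\left(\wedge^{\bullet,\bullet}X,\,\del,\,\delbar\right)$ and $\left(\ker\Delta_{BC}^{g},\,0,\,0\right)\hookrightarrow\left(\wedge^{\bullet,\bullet}X,\,\del,\,\delbar\right)$ then become morphisms of (bi-)differential $\Z^2$-graded algebras, and they induce isomorphisms on $H_{\delbar}$, respectively on $H_{BC}$, by classical Hodge theory, respectively by \cite{schweitzer}.

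First I would fix the model: the Hopf surface with the Calabi-Eckmann structure is the compact Lie group $\Sf^1\times\Sf^3$ carrying a left-invariant complex structure, and I would use the left-invariant unitary co-frame $\left\{\varphi^1,\varphi^2\right\}$ of the $(1,0)$-forms considered in \cite[\S2.1]{angella-dloussky-tomassini}, together with the associated left-invariant Hermitian metric $g$; since a Hopf surface carries no holomorphic $1$-form, neither $\varphi^1$ nor $\varphi^2$ is $d$-closed, so the two generators are genuinely coupled by the structure equations. As $g$ is left-invariant and, by \cite[\S2.1]{angella-dloussky-tomassini}, the Dolbeault and Bott-Chern cohomologies of $\Sf^1\times\Sf^3$ are carried by the sub-double-complex of left-invariant forms, the spaces $\ker\Delta_{\delbar}^{g}$ and $\ker\Delta_{BC}^{g}$ consist of left-invariant forms, and I would read their generators off the tables there. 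Up to scalars and a possible relabeling of the co-frame, they are
\[
\ker\Delta_{\delbar}^{g}\;=\;\C\left\langle1\right\rangle\,\oplus\,\C\left\langle\bar\varphi^1\right\rangle\,\oplus\,\C\left\langle\varphi^1\wedge\varphi^2\wedge\bar\varphi^2\right\rangle\,\oplus\,\C\left\langle\varphi^1\wedge\varphi^2\wedge\bar\varphi^1\wedge\bar\varphi^2\right\rangle
\]
and
\[
\ker\Delta_{BC}^{g}\;=\;\C\left\langle1\right\rangle\,\oplus\,\C\left\langle\varphi^2\wedge\bar\varphi^2\right\rangle\,\oplus\,\C\left\langle\varphi^1\wedge\varphi^2\wedge\bar\varphi^2\right\rangle\,\oplus\,\C\left\langle\varphi^2\wedge\bar\varphi^1\wedge\bar\varphi^2\right\rangle\,\oplus\,\C\left\langle\varphi^1\wedge\varphi^2\wedge\bar\varphi^1\wedge\bar\varphi^2\right\rangle\;.
\]

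Then I would check closure under $\wedge$ by inspection of these finite lists — this is the easy part. In the Bott-Chern case every generator of positive degree is divisible by $\varphi^2$, so the wedge product of any two of them vanishes; thus $\ker\Delta_{BC}^{g}$ is $\C\left\langle1\right\rangle$ plus a square-zero ideal, trivially a subalgebra of $\left(\wedge^{\bullet,\bullet}X,\,\wedge\right)$. In the Dolbeault case, using that $\eta\wedge\eta=0$ for a $1$-form $\eta$ and that all forms of degree $>4$ vanish, the only product of positive-degree generators not forced to be zero is $\bar\varphi^1\wedge\left(\varphi^1\wedge\varphi^2\wedge\bar\varphi^2\right)=\pm\,\varphi^1\wedge\varphi^2\wedge\bar\varphi^1\wedge\bar\varphi^2$, which again lies in $\ker\Delta_{\delbar}^{g}$; so $\ker\Delta_{\delbar}^{g}$ is a subalgebra too. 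Hence $g$, and therefore $X$, is both geometrically-Dolbeault-formal and geometrically-Bott-Chern-formal.

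The only genuinely non-routine ingredient is the explicit determination of the harmonic representatives — the linear-algebra computation over the double complex of left-invariant forms of $\R\oplus\mathfrak{su}(2)$, with the identification of the harmonic projections — which is exactly what is carried out in \cite[\S2.1]{angella-dloussky-tomassini}, so I would quote it rather than reproduce it. Granting those tables, I expect no real obstacle: closure under $\wedge$ is a short bookkeeping of bidegrees and signs, essentially because the space of harmonic forms of a Hopf surface collapses to $\C\left\langle1\right\rangle$ together with an ideal whose only nonzero products are scalar multiples of the harmonic volume form.
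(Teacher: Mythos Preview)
Your proposal is correct and follows essentially the same approach as the paper: both quote the explicit $\Delta_{\delbar}^{g}$- and $\Delta_{BC}^{g}$-harmonic representatives from \cite[\S2.1]{angella-dloussky-tomassini} and then observe that these spaces are closed under the wedge product. The paper's labeling is the swap $\varphi^1\leftrightarrow\varphi^2$ relative to yours (exactly the relabeling you anticipate), and it records closure by writing each kernel in the form $\wedge^{\bullet,\bullet}\left(\cdots\right)$ rather than checking the products explicitly as you do.
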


\begin{proof}
In fact, see \cite[Proposition 2.4]{angella-dloussky-tomassini}, for a fixed Hermitian metric $g$ and a fixed left-invariant co-frame of the holomorphic cotangent bundle, one computes the following harmonic representatives:
$$ \ker\Delta^{g}_{\delbar} \;=\; \wedge^{\bullet,\bullet} \left( \C\left\langle 1 \right\rangle \oplus \C\left\langle \bar\varphi^{2} \right\rangle \oplus \C\left\langle \varphi^1\wedge\varphi^2\wedge\bar\varphi^1 \right\rangle \oplus \C\left\langle \varphi^1\wedge\varphi^2\wedge\bar\varphi^1\wedge\bar\varphi^2 \right\rangle \right) \;, $$
and
\begin{eqnarray*}
 \ker\Delta^{g}_{BC} &=& \wedge^{\bullet,\bullet} \left( \C\left\langle 1 \right\rangle \oplus \C\left\langle \bar\varphi^{1}\wedge\bar\varphi^{1} \right\rangle \oplus \C\left\langle \varphi^1\wedge\varphi^2\wedge\bar\varphi^1 \right\rangle \right. \\[5pt]
 && \left. \oplus \C\left\langle \varphi^1\wedge\bar\varphi^1\wedge\bar\varphi^2 \right\rangle \oplus \C\ \left\langle \varphi^1\wedge\varphi^2\wedge\bar\varphi^1\wedge\bar\varphi^2 \right\rangle \right) \;, 
\end{eqnarray*}
proving the statement.
\end{proof}

\subsection{Calabi-Eckmann structure on \texorpdfstring{$\mathbb{S}^3\times\mathbb{S}^3$}{S3 cross S3}}
E. Calabi and B. Eckmann constructed in \cite{calabi-eckmann} a complex structure on the manifolds $M_{u,v}$ diffeomorphic to $\mathbb{S}^{2u+1}\times\mathbb{S}^{2v+1}$, where $u,v\in\N$. More precisely, $M_{u,v}$ is the total space of an analytic fibre bundle with fibre a torus and base $\CP^u\times\CP^v$. It is also the total space of an analytic fibre bundle with fibre $M_{0,v}$ and base $\CP^v$. In the case $uv=0$, the manifold $M_{u,v}$ is called {\em Hopf manifold}.

\medskip

We show the following, concerning $\mathbb{S}^3\times\mathbb{S}^3$.

\begin{prop}
 The manifold $\mathbb{S}^3\times\mathbb{S}^3$ endowed with the Calabi-Eckmann complex structure is geometrically-Bott-Chern-formal.
\end{prop}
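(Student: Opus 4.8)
The plan is to follow the scheme already used above for the compact complex surfaces and for the Hopf surface: exhibit an explicit Hermitian metric $g$ on $X=\mathbb{S}^3\times\mathbb{S}^3$ whose space $\ker\Delta_{BC}^{g}$ of Bott-Chern-harmonic forms is closed under the wedge product. Since $H_{BC}(\iota_{BC}^{g})$ is an isomorphism for any Hermitian metric by \cite[Corollaire 2.3]{schweitzer}, this is exactly what is needed to conclude that $g$, and hence $X$, is geometrically-Bott-Chern-formal. Concretely, I would fix a left-invariant co-frame $\{\varphi^1,\varphi^2,\varphi^3\}$ of the holomorphic cotangent bundle adapted to the Calabi-Eckmann structure (so that $X$ is an elliptic bundle over $\CP^1\times\CP^1$), write down the resulting structure equations as in \cite{calabi-eckmann}, and take $g$ to be the left-invariant metric making this co-frame unitary. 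The compact Lie group $G:=\mathbb{S}^3\times\mathbb{S}^3$ acts on $X$ by left translations through biholomorphic isometries of $(X,g)$, so $\del$, $\delbar$, $*_g$ and $\Delta_{BC}^{g}$ all commute with the $G$-action; averaging forms over $G$ yields a morphism of bi-differential $\Z^2$-graded algebras onto the left-invariant forms, and, as for the other homogeneous examples in the paper, a symmetrization argument gives that the inclusion $\iota\colon(\wedge^{\bullet,\bullet}\g^*,\del,\delbar)\hookrightarrow(\wedge^{\bullet,\bullet}X,\del,\delbar)$ induces an isomorphism in Bott-Chern cohomology. Comparing dimensions, through Schweitzer's Hodge theory applied both on $X$ and on the finite-dimensional complex $\wedge^{\bullet,\bullet}\g^*$ (which is preserved by $\Delta_{BC}^{g}$ by $G$-invariance), one then gets that every $g$-Bott-Chern-harmonic form is left-invariant, so the whole computation of $\ker\Delta_{BC}^{g}$ takes place inside $\wedge^{\bullet,\bullet}\g^*$.

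\textbf{Explicit harmonic forms and conclusion.} Inside $\wedge^{\bullet,\bullet}\g^*$ the operators $\del$, $\delbar$ and their $*_g$-adjoints have constant coefficients, so, using the description $\ker\Delta_{BC}^{g}=\ker\del\cap\ker\delbar\cap\ker(\del\delbar)^{*}$ (immediate from the explicit form of $\Delta_{BC}^{g}$ recalled above, by positivity), computing $\ker\Delta_{BC}^{g}$ reduces to a finite list of linear-algebra problems, one for each bidegree; this should produce, exactly as in the surface tables, an explicit set of monomials in $\varphi^i,\bar\varphi^j$ spanning $\ker\Delta_{BC}^{g}$. The last step is to inspect this list and verify by hand that the wedge product of any two listed generators is again a listed generator up to a scalar, or is zero, i.e. that $\ker\Delta_{BC}^{g}$ is a $\Z^2$-graded subalgebra of $\wedge^{\bullet,\bullet}X$. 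Combined with the previous paragraph, this makes $\iota_{BC}^{g}\colon(\ker\Delta_{BC}^{g},0,0)\hookrightarrow(\wedge^{\bullet,\bullet}X,\del,\delbar)$ a morphism of bi-differential $\Z^2$-graded algebras with $H_{BC}(\iota_{BC}^{g})$ an isomorphism, which is the assertion.

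\textbf{Main obstacle.} The delicate part is the explicit determination of $\ker\Delta_{BC}^{g}$: $\Delta_{BC}^{g}$ is a fourth-order operator assembled from six summands, so even on the finite-dimensional invariant complex the bookkeeping is heavy; what makes it manageable is (i) working one bidegree at a time and exploiting complex conjugation together with the Hodge-$*_g$ duality between Bott-Chern- and Aeppli-harmonic forms, and (ii) replacing $\Delta_{BC}^{g}\alpha=0$ by the first-order system $\del\alpha=\delbar\alpha=(\del\delbar)^{*}\alpha=0$. A second point to handle carefully is the symmetrization step confining everything to left-invariant forms; this is where one genuinely uses that $\mathbb{S}^3\times\mathbb{S}^3$ is a compact homogeneous space.
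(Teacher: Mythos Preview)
Your overall plan --- reduce to left-invariant forms, compute $\ker\Delta_{BC}^{g}$ there as $\ker\del\cap\ker\delbar\cap\ker(\del\delbar)^{*}$, and then check closure under wedge --- is exactly the shape of the paper's argument. Two points, however, need attention.

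First, a small slip: averaging over $G$ is \emph{not} a morphism of algebras (it does not respect products), only a morphism of bi-complexes. This does not matter for the reduction step, but you should not claim it.

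Second, and more seriously, the ``symmetrization argument'' you invoke to show that $H_{BC}(\iota)$ is an isomorphism is the real crux, and you have not justified it. For de Rham cohomology the homotopy formula gives $L_{V}\alpha=\de\iota_{V}\alpha$ on closed forms, so a connected group acts trivially on $H_{dR}$ and averaging preserves the class. For Bott-Chern cohomology the analogous computation, with $V=Z+\bar Z$ and $Z$ holomorphic, yields only $L_{V}\alpha=\del(\iota_{Z}\alpha)+\delbar(\iota_{\bar Z}\alpha)$ for $\alpha\in\ker\del\cap\ker\delbar$; this lies in $\imm\del+\imm\delbar$, not in $\imm\del\delbar$, so it is not clear that $G$ acts trivially on $H_{BC}(X)$, and hence not clear that the harmonic representative is $G$-invariant. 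Without this, your dimension comparison between $\ker\Delta_{BC}^{g}$ and its restriction to invariant forms does not get off the ground. The paper avoids this difficulty entirely: it first pins down the Dolbeault Hodge numbers of the Calabi-Eckmann threefold from \cite[Appendix II, Theorem 9.5]{hirzebruch}, then uses closure of the invariant sub-complex under $*_{g}$ to get injectivity of $H_{\delbar}(\iota)$ (as in \cite{console-fino}), concludes by a dimension count that $H_{\delbar}(\iota)$ is an isomorphism, and finally invokes \cite[Theorem 1.3, Proposition 2.2]{angella-kasuya-1} to pass from Dolbeault to Bott-Chern. Only after that does it list the Bott-Chern harmonic representatives and verify that their products are again harmonic. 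If you want to keep your direct route, you must either prove that a compact connected group of biholomorphic isometries acts trivially on $H_{BC}$, or bypass the issue as the paper does.
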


\begin{proof}
 Consider the differentiable manifold $X:=\mathbb{S}^3\times\mathbb{S}^3$. View $\mathbb{S}^3=SU(2)$ as a Lie group: it has a global left-invariant co-frame $\left\{e^1,e^2,e^3\right\}$ such that
 $$ \de e^1 \;=\; -2e^2\wedge e^3 \;, \qquad \de e^2 \;=\; 2e^1\wedge e^3 \;, \qquad \text{ and } \qquad \de e^3 \;=\; -2 e^1\wedge e^2 \;.$$
 Hence, we consider a global left-invariant co-frame $\left\{e^1,e^2,e^3,f^1,f^2,f^3\right\}$ for $TX$ with structure equations
 $$ \left\{\begin{array}{rcl}
            \de e^1 &=&          -  2\, e^2 \wedge e^3 \\[5pt]
            \de e^2 &=& \phantom{+} 2\, e^1 \wedge e^3 \\[5pt]
            \de e^3 &=&          -  2\, e^1 \wedge e^2 \\[5pt]
            \de f^1 &=&          -  2\, f^2 \wedge f^3 \\[5pt]
            \de f^2 &=& \phantom{+} 2\, f^1 \wedge f^3 \\[5pt]
            \de f^3 &=&          -  2\, f^1 \wedge f^2
           \end{array}\right. \;.
 $$

 Endow $X$ with the left-invariant almost-complex structure $J$ defined by the $(1,0)$-forms
 $$ \left\{\begin{array}{rcl}
            \varphi^1 &:=& e^1 + \im\, e^2 \\[5pt]
            \varphi^2 &:=& f^1 + \im\, f^2 \\[5pt]
            \varphi^3 &:=& e^3 + \im\, f^3
           \end{array}\right. \;.
 $$
 By computing the complex structure equations,
 $$ \left\{\begin{array}{rcl}
            \del \varphi^1 &=& \im\, \varphi^1 \wedge \varphi^3 \\[5pt]
            \del \varphi^2 &=& \varphi^2 \wedge \varphi^3 \\[5pt]
            \del \varphi^3 &=& 0
           \end{array}\right.
    \qquad \text{ and } \qquad
    \left\{\begin{array}{rcl}
            \delbar \varphi^1 &=& \im\, \varphi^1 \wedge \bar\varphi^3 \\[5pt]
            \delbar \varphi^2 &=& - \varphi^2 \wedge \bar\varphi^3 \\[5pt]
            \delbar \varphi^3 &=& - \im\, \varphi^1 \wedge \bar\varphi^1 + \varphi^2\wedge\bar\varphi^2
           \end{array}\right. \;,
 $$
 we note that $J$ is in fact integrable.

 The manifold $X$ is a compact complex manifold not admitting any K\"ahler metric. (Indeed, the second Betti number is zero.) The above complex structure coincides with the structures that have been studied by E. Calabi and B. Eckmann on the products $\mathbb{S}^{2p+1}\times\mathbb{S}^{2q+1}$ as clarifying examples in non-K\"ahler geometry, \cite{calabi-eckmann}.

 Consider the Hermitian metric $g$ whose associated $(1,1)$-form is
 $$ \omega \;:=\; \frac{\im}{2}\, \sum_{j=1}^{3} \varphi^j \wedge \bar\varphi^j \;. $$

 As for the de Rham cohomology, from the K\"unneth formula we get
 $$ H^\bullet_{dR}(X;\C) \;=\; \C \left\langle 1 \right\rangle \oplus \C\left\langle \varphi^{13\bar1}-\varphi^{1\bar1\bar3},\, \varphi^{23\bar2}+\varphi^{2\bar2\bar3} \right\rangle \oplus \C\left\langle \varphi^{123\bar1\bar2\bar3} \right\rangle \;, $$
 (where, here and hereafter, we shorten, e.g., $\varphi^{13\bar1} := \varphi^1\wedge\varphi^3\wedge\bar\varphi^1$).

 By \cite[Appendix II, Theorem 9.5]{hirzebruch}, one has that a model for the Dolbeault cohomology is given by
 $$ H_{\delbar}^{\bullet,\bullet}(X) \;\simeq\; \left. \C\left[y^{(1,1)}\right] \middle\slash \left(\left(y^{(1,1)}\right)^{2}\right) \right. \otimes \wedge^{\bullet,\bullet} \left( \C\left\langle z \right\rangle^{(2,1)} \oplus \C\left\langle x \right\rangle^{(0,1)} \right) \;, $$
 where superscripts denote bi-degree.
 In particular, we recover that the Hodge numbers are
 $$
 \begin{array}{ccccccc}
  &  &  & h^{0,0}_{\delbar}=1 &  &  &  \\
  &  & h^{1,0}_{\delbar}=0 &  & h^{0,1}_{\delbar}=1 &  &  \\
  & h^{2,0}_{\delbar}=0 &  & h^{1,1}_{\delbar}=1 &  & h^{0,2}_{\delbar}=0 &  \\
 h^{3,0}_{\delbar}=0 &  & h^{2,1}_{\delbar}=1 &  & h^{1,2}_{\delbar}=1 &  & h^{0,3}_{\delbar}=0 \\
  & h^{3,1}_{\delbar}=0 &  & h^{2,2}_{\delbar}=1 &  & h^{1,3}_{\delbar}=0 &  \\
  &  & h^{3,2}_{\delbar}=1 &  & h^{2,3}_{\delbar}=0 &  &  \\
  &  &  & h^{3,3}_{\delbar}=1 &  &  &  \\
 \end{array} \;.
 $$
 Consider the sub-complex
 $$ \iota \colon \wedge \left\langle \varphi^1,\, \varphi^2,\, \varphi^3,\, \bar\varphi^1,\, \bar\varphi^2,\, \bar\varphi^3 \right\rangle \hookrightarrow \wedge^{\bullet,\bullet}X \;. $$
 Since it is closed for the $\C$-linear Hodge-$*$-operator associated to $g$, $H_{\delbar}(\iota)$ is injective, see, e.g., \cite[Lemma 9]{console-fino}, compare also \cite[Theorem 1.6, Remark 1.9]{angella-kasuya-1}. By knowing the Hodge numbers, we note that is it also surjective. Therefore $H_{\delbar}(\iota)$ is an isomorphism. More precisely, we get
 \begin{eqnarray*}
  H^{\bullet,\bullet}_{\delbar}(X) &=& \C\left\langle 1 \right\rangle \oplus \C\left\langle \left[\varphi^{\bar{3}}\right] \right\rangle \oplus \C\left\langle \left[ \im\, \varphi^{1\bar1}+\varphi^{2\bar2} \right] \right\rangle \\[5pt]
   && \oplus \C\left\langle \left[ \varphi^{23\bar2}+\im\,\varphi^{13\bar1}\right] \right\rangle \oplus \C\left\langle \left[\im\varphi^{1\bar1\bar3}+\varphi^{2\bar2\bar3}\right] \right\rangle \\[5pt]
   && \oplus \C\left\langle \left[\im\,\varphi^{23\bar2\bar3}+\varphi^{13\bar1\bar3}\right] \right\rangle \oplus \C\left\langle \left[\varphi^{123\bar1\bar2}\right] \right\rangle \oplus \C\left\langle \left[\varphi^{123\bar1\bar2\bar3}\right] \right\rangle \;,
 \end{eqnarray*}
 where we have listed the $\Delta_{\delbar}^{g}$-harmonic representatives.
 
 Note that the Hermitian metric associated to $\omega$ is not geometrically-Dolbeault-formal: indeed, $\left( \im\, \varphi^{1\bar1}+\varphi^{2\bar2} \right) \wedge \left( \im\, \varphi^{1\bar1}+\varphi^{2\bar2} \right) = -2\,\im\, \varphi^{12\bar1\bar2}$ is not $\Delta_{\delbar}$-harmonic. On the other hand, $X$ is Dolbeault formal in the sense of Neisendorfer and Taylor as proven in \cite[page 188]{neisendorfer-taylor}.

 By \cite[Theorem 1.3, Proposition 2.2]{angella-kasuya-1}, we have also that $H_{BC}(\iota)$ is an isomorphism. In particular, we get
 \begin{eqnarray*}
  H^{\bullet,\bullet}_{BC}(X) &=& \C\left\langle 1 \right\rangle \oplus \C\left\langle \left[\varphi^{1\bar1}\right],\, \left[\varphi^{2\bar2}\right] \right\rangle \oplus \C\left\langle \left[\varphi^{23\bar2}+\im\,\varphi^{13\bar1}\right] \right\rangle \\[5pt]
   && \oplus \C\left\langle \left[\varphi^{2\bar2\bar3}-\im\,\varphi^{1\bar1\bar3}\right] \right\rangle \oplus \C\left\langle \left[ \varphi^{12\bar1\bar2}\right] \right\rangle \oplus \C\left\langle \left[\varphi^{123\bar1\bar2}\right] \right\rangle \\[5pt]
   && \oplus \C\left\langle \left[\varphi^{12\bar1\bar2\bar3}\right] \right\rangle \oplus \C\left\langle \left[\varphi^{123\bar1\bar2\bar3}\right] \right\rangle \;,
 \end{eqnarray*}
 where we list the harmonic representatives with respect to the Bott-Chern Laplacian of the Hermitian metric associated to $\omega$.

 In particular, the Bott-Chern numbers are
 $$
 \begin{array}{ccccccc}
  &  &  & h^{0,0}_{BC}=1 &  &  &  \\
  &  & h^{1,0}_{BC}=0 &  & h^{0,1}_{BC}=0 &  &  \\
  & h^{2,0}_{BC}=0 &  & h^{1,1}_{BC}=2 &  & h^{0,2}_{BC}=0 &  \\
 h^{3,0}_{BC}=0 &  & h^{2,1}_{BC}=1 &  & h^{1,2}_{BC}=1 &  & h^{0,3}_{BC}=0 \\
  & h^{3,1}_{BC}=0 &  & h^{2,2}_{BC}=1 &  & h^{1,3}_{BC}=0 &  \\
  &  & h^{3,2}_{BC}=1 &  & h^{2,3}_{BC}=1 &  &  \\
  &  &  & h^{3,3}_{BC}=1 &  &  &  \\
 \end{array} \;.
 $$

 Note that the product of Bott-Chern harmonic forms is still Bott-Chern harmonic, hence $\omega$ is a geometrically-Bott-Chern-formal metric on $X$.
\end{proof}

\subsection{Holomorphically-parallelizable Nakamura manifold}\label{subsec:hol-par-nakamura}
Consider the {\em holo\-mor\-phically-parallelizable Nakamura manifold}, \cite[\S2]{nakamura}, namely, $X:=\left. \Gamma \middle\backslash G \right.$ where
$$ G \;:=\; \C \ltimes_\phi \C^2 \qquad \text{ with } \qquad \phi(z) \;:=\; \left(\begin{array}{cc}\mathrm{e}^z&0\\0&\mathrm{e}^{-z}\end{array}\right) \;,$$
and $\Gamma$ is a lattice in $G$. In \cite[Example 2]{kasuya-mathz}, respectively \cite[Example 2.26]{angella-kasuya-1}, the Dolbeault, respectively Bott-Chern, cohomology of $X$ is computed, depending on the lattice, by using the techniques developed by H. Kasuya in \cite{kasuya-mathz}, and the results in \cite{angella-kasuya-1}. More precisely, \cite[Corollary 1.3, Corollary 4.2]{kasuya-mathz} provides a differential $\Z^2$-graded algebra
$$ \iota \colon \left( B^{\bullet,\bullet}_{\Gamma},\, \delbar \right) \hookrightarrow \left( \wedge^{\bullet,\bullet}X,\, \delbar \right) \;, $$
being finite-dimensional as a $\C$-vector space, such that $H_{\delbar}(\iota)$ is an isomorphism. In fact, as remarked to us by H. Kasuya, $\left( B^{\bullet,\bullet}_{\Gamma},\, \del,\, \delbar \right)$ has a structure of bi-differential $\Z^2$-graded algebra. As for Bott-Chern cohomology, by \cite[Corollary 2.15]{angella-kasuya-1}, the inclusion
$$ \iota \colon \left( C^{\bullet,\bullet}_{\Gamma},\, \del,\, \delbar \right) \hookrightarrow \left( \wedge^{\bullet,\bullet}X,\, \del,\, \delbar \right) \quad \text{ where } \quad C^{\bullet,\bullet}_{\Gamma} \;:=\; B^{\bullet,\bullet}_{\Gamma} + \overline{ B^{\bullet,\bullet}_{\Gamma} } $$
of bi-differential $\Z^2$-graded $\C$-vector spaces is such that $H_{BC}(\iota)$ is an isomorphism.

\medskip
 
We show that the natural Hermitian metric on the holomorphically-parallelizable Nakamura manifold is not geometrically-Bott-Chern-formal, but it is geometrically-Dolbeault-formal.

\begin{prop}
Let $X = \left. \Gamma \middle\backslash G \right.$ be the holomorphically-parallelizable Nakamura manifold. The Hermitian metric $g := \de z_1 \odot \de \bar z_1 + \esp^{-2z_1} \de z_2 \odot \de \bar z_2 + \esp^{2z_1} \de z_3 \odot \de \bar z_3$ is geometrically-Dolbeault-formal but non-geometrically-Bott-Chern-formal.
\end{prop}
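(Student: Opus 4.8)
The plan is to reduce both parts of the statement to finite-dimensional computations, exploiting that the metric $g$ is adapted to Kasuya's models. First I would record the left-invariant unitary $(1,0)$-coframe on $X$: set $\varphi^1 := \de z_1$, $\varphi^2 := \esp^{-z_1}\de z_2$, $\varphi^3 := \esp^{z_1}\de z_3$, so that $g = \sum_{j=1}^{3}\varphi^j\odot\bar\varphi^j$, with structure equations
$$
\del\varphi^1 = 0, \qquad \del\varphi^2 = -\varphi^1\wedge\varphi^2, \qquad \del\varphi^3 = \varphi^1\wedge\varphi^3, \qquad \delbar\varphi^1 = \delbar\varphi^2 = \delbar\varphi^3 = 0
$$
and their conjugates. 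I would then recall, as in \S\ref{subsec:hol-par-nakamura}, Kasuya's finite-dimensional subcomplex $(B^{\bullet,\bullet}_{\Gamma},\delbar)\hookrightarrow(\wedge^{\bullet,\bullet}X,\delbar)$ computing $H^{\bullet,\bullet}_{\delbar}(X)$ and the subspace $C^{\bullet,\bullet}_{\Gamma} = B^{\bullet,\bullet}_{\Gamma} + \overline{B^{\bullet,\bullet}_{\Gamma}}$ computing $H^{\bullet,\bullet}_{BC}(X)$, spanned by forms $\esp^{\lambda z_1 + \mu\bar z_1}\,\varphi^I\wedge\bar\varphi^J$ for finitely many exponents dictated by $\Gamma$; since $*_g$ preserves these sub-objects, $\Delta_{\delbar}^{g}$ and $\Delta_{BC}^{g}$ restrict to them, and $\ker\Delta_{\delbar}^{g}$, $\ker\Delta_{BC}^{g}$ can be obtained by finite linear algebra there.

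For geometric-Dolbeault-formality I would determine $\ker\Delta_{\delbar}^{g}$ explicitly. Using $\delbar^{*} = -*_g\del *_g$ and the structure equations, a left-invariant monomial $\varphi^I\wedge\bar\varphi^J$ is $\delbar$-harmonic precisely when $\delbar\bar\varphi^J = 0$ (i.e.\ $J\notin\{\{2\},\{3\}\}$) and $\del\varphi^{J^{c}} = 0$ (i.e.\ $J\notin\{\{1,2\},\{1,3\}\}$), that is, exactly when $J\in\{\emptyset,\{1\},\{2,3\},\{1,2,3\}\}$, for arbitrary $I$; after checking inside $B^{\bullet,\bullet}_{\Gamma}$ that no further twisted harmonic forms occur, $\ker\Delta_{\delbar}^{g}$ is spanned by these monomials. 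Multiplicative closure then follows from the combinatorial fact that if $J,L$ lie in $\{\emptyset,\{1\},\{2,3\},\{1,2,3\}\}$ and $J\cap L = \emptyset$ then $J\cup L$ again lies in it; hence $\iota_{\delbar}^{g}\colon(\ker\Delta_{\delbar}^{g},0)\hookrightarrow(\wedge^{\bullet,\bullet}X,\delbar)$ is a morphism of differential $\Z^2$-graded algebras inducing an isomorphism in cohomology, so $g$ is geometrically-Dolbeault-formal.

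For the negative assertion it suffices to exhibit two $\Delta_{BC}^{g}$-harmonic forms with non-harmonic product, for which I would take $\varphi^1\wedge\varphi^2$ and $\bar\varphi^1\wedge\bar\varphi^2$. By the structure equations both are $\del$-closed and $\delbar$-closed, and the remaining summands of $\Delta_{BC}^{g}$ vanish on them for bidegree reasons, so both are $\Delta_{BC}^{g}$-harmonic. However
$$
(\varphi^1\wedge\varphi^2)\wedge(\bar\varphi^1\wedge\bar\varphi^2) \;=\; \varphi^1\wedge\varphi^2\wedge\bar\varphi^1\wedge\bar\varphi^2 \;=\; \del\delbar\left( -\,\varphi^2\wedge\bar\varphi^2 \right)
$$
is a nonzero $\del\delbar$-exact form, hence not $\Delta_{BC}^{g}$-harmonic (a $\del\delbar$-exact $\Delta_{BC}^{g}$-harmonic form necessarily vanishes). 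Thus $\ker\Delta_{BC}^{g}$ is not closed under the wedge product, so $g$ is not geometrically-Bott-Chern-formal. One checks moreover that the triple Aeppli-Bott-Chern-Massey products formed from these classes vanish, so here the obstruction is visible only at the level of the Laplacian and not through Theorem \ref{thm:ABC-massey-geom-BC}.

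The Bott-Chern half is therefore short and self-contained, while the main obstacle is the Dolbeault half: since the Dolbeault cohomology of the Nakamura manifold is lattice-sensitive, one has to confirm inside $B^{\bullet,\bullet}_{\Gamma}$ — writing out the restriction of $\Delta_{\delbar}^{g}$ and the action of $*_g$ on the twisted monomials $\esp^{\lambda z_1 + \mu\bar z_1}\varphi^I\wedge\bar\varphi^J$, with care for the exponential factors and signs in the adjoints — that no harmonic forms beyond the ones found above occur, so that the clean combinatorial picture is not spoiled; granted this, the multiplicative closure is immediate.
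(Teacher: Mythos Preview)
Your Bott-Chern argument is correct and in fact slightly cleaner than the paper's: the pair $\varphi^{1}\wedge\varphi^{2}=\esp^{-z_1}\de z_{12}$ and $\bar\varphi^{1}\wedge\bar\varphi^{2}=\esp^{-\bar z_1}\de z_{\bar1\bar2}$ works uniformly for every lattice, whereas the paper splits into two cases and in case {\itshape (a)} uses the twisted form $\esp^{\bar z_1}\de z_{3\bar1}$ as the second factor instead. Your observation that a $\del$-closed, $\delbar$-closed $(2,0)$- or $(0,2)$-form is automatically $\Delta_{BC}^{g}$-harmonic (the only surviving summand $(\del\delbar)(\del\delbar)^{*}$ lands in bidegree $(1,-1)$ or $(-1,1)$) is exactly right, as is the remark that a nonzero $\del\delbar$-exact form can never be $\Delta_{BC}^{g}$-harmonic.

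The Dolbeault half, however, has a genuine gap. You claim that $\ker\Delta_{\delbar}^{g}$ is spanned by the untwisted monomials $\varphi^{I}\wedge\bar\varphi^{J}$ with $J\in\{\emptyset,\{1\},\{2,3\},\{1,2,3\}\}$, provided one checks ``inside $B^{\bullet,\bullet}_{\Gamma}$ that no further twisted harmonic forms occur''. But this check fails for the lattice case {\itshape (a)} (i.e.\ $b,d\in\pi\Z$): there Kasuya's model is the full exterior algebra on $\de z_{1},\,\esp^{-z_1}\de z_{2},\,\esp^{z_1}\de z_{3},\,\de\bar z_{1},\,\esp^{-z_1}\de\bar z_{2},\,\esp^{z_1}\de\bar z_{3}$, and the last two generators are genuine twisted $(0,1)$-forms (note $\esp^{-z_1}\de\bar z_{2}\neq\bar\varphi^{2}=\esp^{-\bar z_1}\de\bar z_{2}$) which are $\Delta_{\delbar}^{g}$-harmonic. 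So in case {\itshape (a)} the space $\ker\Delta_{\delbar}^{g}$ is strictly larger than your list, and your combinatorial multiplicative-closure check does not apply as written.

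The paper avoids this difficulty entirely by a much shorter route: in both lattice cases one has $\delbar\lfloor_{B^{\bullet,\bullet}_{\Gamma}}=0$, and since $B^{\bullet,\bullet}_{\Gamma}$ is closed under $*_{g}$ and computes the Dolbeault cohomology, one gets $\ker\Delta_{\delbar}^{g}=B^{\bullet,\bullet}_{\Gamma}$ directly. Because $B^{\bullet,\bullet}_{\Gamma}$ is by construction an exterior algebra, multiplicative closure is automatic and no case-by-case monomial analysis is needed. Your strategy would be salvaged by adopting this observation; as it stands, your argument establishes the Dolbeault claim only in case {\itshape (b)}.
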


\begin{proof}
 To describe explicitly the cohomologies of the holomorphically-parallelizable Nakamura manifold, depending on the lattice, consider a set $\left\{ z_1, z_2, z_3 \right\}$ of local holomorphic coordinates, where $\left\{ z_1 \right\}$ and $\left\{ z_2, z_3 \right\}$ give local holomorphic coordinates on $\C$ and, respectively, $\C^2$. (As a matter of notation, we shorten, e.g., $\esp^{-z_1}\de z_{1\bar2} := \esp^{-z_1}\de z_{1} \wedge \de\bar z_{2}$.)
 The lattice is of the form
 $$ \Gamma \;=\; \left( \Z \left( a+\sqrt{-1}\,b \right) + \Z \left( c+\sqrt{-1}\,d \right) \right) \ltimes_{\phi} \Gamma^{\prime\prime} $$
 where $\Gamma^{\prime\prime}$ is a lattice of $\C^{2}$, and $a+\sqrt{-1}\,b \in \C$ and $c+\sqrt{-1}\,d\in \C$ are such that $ \Z(a+\sqrt{-1}\,b)+\Z(c+\sqrt{-1}\,d)$ is a lattice in $\C$ and $\phi(a+\sqrt{-1}\,b)$ and $\phi(c+\sqrt{-1}\,d)$ are conjugate to elements of $\mathrm{SL}(4;\Z)$, where we regard $\mathrm{SL}(2;\C)\subset \mathrm{SL}(4;\R)$, see \cite{hasegawa-dga}.
 We distinguish the following two cases.

 \begin{description}
  \item[case {\itshape (a)}] Suppose that
  $$ b\in\pi\,\Z \quad \text{ and } \quad d \in \pi\,\Z \;. $$
  In this case, the Dolbeault cohomology of $X$ is computed by means of the bi-differential $\Z^2$-graded algebra
  \begin{eqnarray*}
  B^{\bullet,\bullet}_{\Gamma} &=& \wedge^{\bullet,\bullet} \left( \C\left\langle \de z_1,\; \esp^{- z_1}\de z_2,\; \esp^{z_1}\de z_3 \right\rangle \right. \\[5pt]
  && \left. \oplus \C\left\langle \de z_1,\; \esp^{- z_1}\de \bar z_2,\; \esp^{z_1}\de \bar z_3 \right\rangle \right) \;,
  \end{eqnarray*}
  see \cite[Example 2, page 446]{kasuya-mathz}.
  Note that
  $$ \left. \delbar \right\lfloor_{B^{\bullet,\bullet}_{\Gamma}} \;=\; 0 $$
  and that $B^{\bullet,\bullet}_{\Gamma}$ is closed for the Hodge-$*$-operator associated to the metric $g := \de z_1 \odot \de \bar z_1 + \esp^{-2 z_1} \de z_2 \odot \de \bar z_2 + \esp^{2z_1} \de z_3 \odot \de\bar z_3$. Therefore we have
  $$ H^{\bullet,\bullet}_{\delbar}(X) \;\simeq\; \ker \Delta_{\delbar}^{g} \;=\; B^{\bullet,\bullet}_{\Gamma} \;, $$
  see \cite[Example 2, page 446]{kasuya-mathz}.
  In particular, $X$ is geometrically-Dolbeault-formal. 

  As for the Bott-Chern cohomology, consider the $\Z^2$-graded vector space $\left( C^{\bullet,\bullet},\, \del,\, \delbar \right)$ as in \cite[Table 7]{angella-kasuya-1}.
  (Note that $C^{\bullet,\bullet}$ has in fact a structure of algebra.) Since $C^{\bullet,\bullet}$ is closed for the Hodge-$*$-operator associated to the metric $g$, it allows to compute the $\Delta_{BC}^{g}$-harmonic representatives of the Bott-Chern cohomology as done in \cite[Table 8]{angella-kasuya-1}.
  Note that, e.g.,
  $$ \esp^{-z_1}\,\de z_{12} \qquad \text{ and } \qquad \esp^{\bar z_1}\,\de z_{3\bar1} $$
  are $\Delta_{BC}^{g}$-harmonic forms but their product
  $$ \esp^{-z_1}\,\de z_{12} \wedge \esp^{\bar z_1}\,\de z_{3\bar1} \;=\; \esp^{-2\im\Im z_1}\,\de z_{123\bar1} $$
  is not. In particular, the metric $g$ on $X$ is not geometrically-Bott-Chern-formal.

  \item[case {\itshape (b)}] Suppose that
  $$ b\not \in \pi\,\Z \quad \text{ or } \quad d \not \in\pi\,\Z \;. $$
  In this case, the Dolbeault cohomology of $X$ is computed by means of the bi-differential $\Z^2$-graded algebra
  \begin{eqnarray*}
  B^{\bullet,\bullet}_{\Gamma} &=& \wedge^{\bullet,\bullet} \left( \C\left\langle \de z_1,\; \esp^{- z_1}\de z_2,\; \esp^{z_1}\de z_3 \right\rangle \right. \\[5pt]
  && \left. \oplus \C\left\langle \de \bar z_1 \right\rangle \oplus \C\left\langle \de \bar z_2 \wedge \de \bar z_3 \right\rangle \right) \;,
  \end{eqnarray*}
  see \cite[Example 2, page 447]{kasuya-mathz}.
  Note that
  $$ \left. \delbar \right\lfloor_{B^{\bullet,\bullet}_{\Gamma}} \;=\; 0 $$
  and that $B^{\bullet,\bullet}_{\Gamma}$ is closed for the Hodge-$*$-operator associated to the metric $g := \de z_1 \odot \de \bar z_1 + \esp^{-2 z_1} \de z_2 \odot \de \bar z_2 + \esp^{2z_1} \de z_3 \odot \de\bar z_3$. Therefore we have
  $$ H^{\bullet,\bullet}_{\delbar}(X) \;\simeq\; \ker \Delta_{\delbar}^{g} \;=\; B^{\bullet,\bullet}_{\Gamma} \;, $$
  see \cite[Example 2, page 447]{kasuya-mathz}.
  In particular, $X$ is geometrically-Dolbeault-formal.
  
  As for the Bott-Chern cohomology, consider the $\Z^2$-graded vector space $\left( C^{\bullet,\bullet},\, \del,\, \delbar \right)$ as in \cite[Table 10]{angella-kasuya-1}.
  (Note that $C^{\bullet,\bullet}$ does not have a structure of algebra: for example, $\esp^{-z_1}\,\de z_2 \in C^{1,0}$ and $\esp^{-\bar z_1}\,\de \bar z_2 \in C^{0,1}$ but $\esp^{-z_1}\,\de z_2 \wedge \esp^{-\bar z_1}\,\de \bar z_2 = \esp^{-2\Re z_1}\, \de z_{2\bar2} \not\in C^{1,1}$.) Since $C^{\bullet,\bullet}$ is closed for the Hodge-$*$-operator associated to the metric $g$, it allows to compute the $\Delta_{BC}^{g}$-harmonic representatives of the Bott-Chern cohomology as done in \cite[Table 11]{angella-kasuya-1}.
  Note that, e.g.,
  $$ \esp^{-z_1}\,\de z_{12} \qquad \text{ and } \qquad \esp^{-\bar z_1}\,\de z_{\bar1\bar2} $$
  are $\Delta_{BC}^{g}$-harmonic forms but their product
  $$ \esp^{-z_1}\,\de z_{12} \wedge \esp^{-\bar z_1}\,\de z_{\bar1\bar2} \;=\; \esp^{-2\Re z_1}\,\de z_{12\bar1\bar2} $$
  is not. In particular, the metric $g$ on $X$ is not geometrically-Bott-Chern-formal.
 \end{description}
This proves the statement.
\end{proof}

As regards the existence of non-zero triple Aeppli-Bott-Chern Massey products, one can argue as in \cite{tardini-tomassini}, showing that it depends on the choice of the lattice $\Gamma$.

\end{document}